\documentclass[twoside,a4paper,11pt]{article}
\usepackage{amsthm,amsmath,amssymb,amscd,graphicx,amsfonts,stmaryrd}
\usepackage{mathrsfs}
\usepackage[all]{xy}
\usepackage[margin=25mm]{geometry}
\usepackage{mathtools}
\usepackage{euscript}
\usepackage{enumitem}
\usepackage{fancyhdr}
\usepackage{latexsym}
\usepackage{amsfonts}
\usepackage{bm}
\usepackage[d]{esvect}
\usepackage{array}
\usepackage{url}
\usepackage{hyperref}
\usepackage{xcolor}
\usepackage[normalem]{ulem}

\pdfstringdefDisableCommands{%
  \def\emph#1{#1}
}

\allowdisplaybreaks[1]

\pagestyle{fancy}
\fancyhf{}
\fancyhead[CO]{\textit{Continuity of the Revuz correspondence under the absolute continuity condition}} 
\fancyhead[CE]{\textit{R. Noda}}

\fancyfoot[C]\thepage

\mathtoolsset{showonlyrefs=true}

\numberwithin{equation}{section}

\theoremstyle{definition}
\newtheorem{exm} {Example}[section]
\newtheorem{dfn}[exm] {Definition}
\newtheorem{rem}[exm] {Remark}

\theoremstyle{plane}
\newtheorem{lem}[exm]{Lemma}
\newtheorem{prop}[exm]{Proposition}
\newtheorem{thm}[exm]{Theorem}

\newtheorem{assum}[exm]{Assumption}


\DeclareMathOperator{\supp}{supp} 
 

\newcommand{\RN}{\mathbb{R}} 

\newcommand{\Borel}{\mathcal{B}}

\newcommand{\domain}{\mathcal{F}}
\newcommand{\form}{\mathcal{E}}
\newcommand{\filt}[0]{\EuScript{F}}
\newcommand{\smooth}[0]{\mathcal{S}}
\newcommand{\resolop}[0]{R}
\newcommand{\resol}[0]{r}
\newcommand{\sigalg}[0]{\EuScript{M}}

\newcommand{\supnorm}[1]{\| #1 \|_{\infty}}
\newcommand{\inftynorm}[2]{\| #1 \|_{L^{\infty}(S, #2)}}

\title{Continuity of the Revuz correspondence under the absolute continuity condition} 
\date{}
\author{Ryoichiro Noda\thanks{Research Institute for Mathematical Sciences, Kyoto University, Kyoto, 606-8502,
JAPAN. E-mail:sgrndr@kurims.kyoto-u.ac.jp}}

\begin{document}

\maketitle
\begin{abstract}
  In this paper, 
  we consider 
  standard processes that admit dual processes and satisfy the absolute continuity condition,
  i.e., processes possess transition densities.
  For such processes, 
  the Revuz correspondence relates positive continuous additive functionals (PCAFs) to so-called smooth measures.
  We show the continuity of this correspondence. 
  Specifically,
  we show that 
  if the $1$-potentials of smooth measures converge (locally) uniformly as functions,
  then the associated PCAFs converge.
  This result is derived by directly estimating the distance between the PCAFs 
  in terms of the distance between the $1$-potentials of the associated smooth measures.
  Furthermore,
  in cases where the transition density is jointly continuous,
  we present sufficient conditions for the convergence of $1$-potentials 
  based on the weak or vague convergence of smooth measures.
  The framework in this paper contains the class of symmetric Hunt processes 
  that are associated with regular Dirichlet forms and satisfy the absolute continuity condition.
\end{abstract}



\section{Introduction} \label{sec: introduction}

Positive continuous additive functionals (PCAFs) of Markov processes, such as local times, 
can be very useful for analyzing these processes (see Definition~\ref{dfn: PCAF} below).
In \cite{Revuz_70_Mesures}, 
Revuz discovered a one-to-one correspondence between PCAFs and measures known as smooth measures (see Definition~\ref{dfn: smooth measure} below), 
which is now called the Revuz correspondence. 
This correspondence forms a core of the theory of PCAFs. 
However, little research has been conducted on the topological properties of the Revuz correspondence.
Recently, 
Nishimori, Tomisaki, Tsuchida and Uemura \cite{Nishimori_Tomisaki_Tsuchida_Uemura_2025} demonstrated a certain compactness property of the Revuz correspondence. 
In particular, they showed that for a symmetric Hunt process associated with a regular Dirichlet form $(\form, \domain)$ on $L^{2}(S, m)$, 
where $S$ is a locally compact, separable, metrizable topological space and $m$ is a Radon measure on $S$ with full support, 
if the $1$-potentials of smooth measures of finite energy integrals converge with respect to $\form_{1}$, 
where $\form_{1}(f, g) \coloneqq \form(f, g) + \int f g \, dm$,
then a subsequence of the associated PCAFs converges 
in the local uniform topology 
almost surely for quasi-everywhere starting point
\cite[Theorem 4.1]{Nishimori_Tomisaki_Tsuchida_Uemura_2025}.

We focus on the continuity of the Revuz correspondence, that is,
convergence of PCAFs in terms of their associated smooth measures.
The study of convergence of PCAFs is interesting in itself from the viewpoint of the topological properties of the Revuz correspondence, 
but it is also important in applications such as the construction of time-changed processes,
such as Liouville Brownian motions, 
and the discussion of their convergence
\cite{Andres_Kajino_16_Continuity,Croydon_Hambly_Kumagai_17_Time-changes,Garban_Rhodes_Vargas_16_Liouville,Ooi_25_TimeChange}.
We consider standard processes that admit dual processes and possess transition densities.
In particular, the framework contains symmetric Hunt processes
that correspond to regular Dirichlet forms and possess transition densities (cf.\ \cite{Fukushima_Oshima_Takeda_11_Dirichlet}).
In our first result, Theorem \ref{1. thm: the main strong result},
we show that if the $1$-potentials of smooth measures are bounded and converge uniformly, 
then the associated PCAFs converge in the sense that 
the expected supremum of the squared differences of PCAFs over any compact interval converges to zero. 
In our second result, Theorem \ref{1. thm: the main general result},
the first result is extended to more general smooth measures,
and it is shown that 
if the $1$-potentials of smooth measures converge locally uniformly and the process is conservative,
then the associated PCAFs converge uniformly on compacts in probability
(often referred to as the ucp topology).
Our approach differs from that of \cite{Nishimori_Tomisaki_Tsuchida_Uemura_2025}, 
where smooth measures are approximated by a sequence of measures that are absolutely continuous with respect to the reference measure $m$. 
Instead of such an approximation argument, 
we directly estimate the distance between two PCAFs by the distance between the $1$-potentials of the corresponding smooth measures 
(see Theorem \ref{3. thm: key inequality}). 
Using this inequality, our convergence results are proved.

To state our main result,
we introduce several pieces of notation.
The details on our framework are presented in Section~\ref{sec: dual processes}.
We fix a locally compact separable metrizable topological space $S$. 
We let $X = ( (X_{t})_{t \in [0, \infty)}, (P_{x})_{x \in S})$
be a standard process on $S$ admitting a dual process.
We suppose that $X$ admits a transition density $p \colon (0,\infty) \times S \times S \to [0, \infty]$ with respect to a $\sigma$-finite Borel measure $m$.
We then define for each $\alpha \geq 0$ the $\alpha$-potential density $(\resol_{\alpha}(x,y))_{x, y \in S}$ by setting 
\begin{equation}
  \resol_{\alpha}(x,y) 
  \coloneqq 
  \int_{0}^{\infty} e^{-\alpha t} p_{t}(x,y)\, dt.
\end{equation}
For a Borel measure $\mu$ on $S$, a Borel subset $E \subseteq S$ and $\alpha \geq 0$,
we set  
\begin{equation} \label{eq: restriction and potential}
  \mu^{E}(dx) \coloneqq 1_{E}(x)\, \mu(dx),\quad
  \resolop_{\alpha}\mu(x) 
  \coloneqq 
  \int \resol_{\alpha}(x,y)\, \mu(dy).
\end{equation}
We call $\resolop_{\alpha}\mu$ the \emph{$\alpha$-potential} of $\mu$.
Following \cite{Fukushima_Oshima_Takeda_11_Dirichlet},
we define $\smooth_{00}$ to be the collection of finite Borel measures $\mu$ on $S$ such that 
$\mu$ charges no semipolar set and satisfies
\begin{equation}
  \supnorm{\resolop_{1}\mu} 
  \coloneqq 
  \sup\{ |\resolop_{1}\mu(x)| \mid x \in S\} 
  < \infty.
\end{equation}
For Borel measures $\mu, \mu_{1}, \mu_{2}, \ldots$ on $S$,
we consider the following conditions.
Note that, for a Borel measure $\nu$ on $S$ and a Borel measurable function $f$ on $S$,
we write $\inftynorm{f}{\nu}$ for the $L^{\infty}$-norm of $f$ with respect to $\nu$, i.e.,
\begin{equation} \label{eq: infty norm}
  \inftynorm{f}{\nu} 
  \coloneqq 
  \inf\{C > 0 \mid |f(x)| \leq C\ \text{for}\ \nu \text{-a.e.}\ x \in S\},
\end{equation}
where we set $\inf \emptyset \coloneqq \infty$.

\begin{assum} \label{1. assum: stronger assumption}
  The measures $\mu_n$ for $n \ge 1$ and $\mu$ belong to $\smooth_{00}$, and
  \begin{equation}
    \lim_{n \to \infty} 
    \inftynorm{\resolop_{1}\mu_{n} - \resolop_{1}\mu}{\mu_{n} + \mu} 
    = 0.
  \end{equation}
\end{assum}

The following is a generalization of the above condition to measures not necessarily belonging to $\smooth_{00}$.

\begin{assum} \label{1. assum: general assumption} \leavevmode
  There exists an increasing sequence $(V_{k})_{k \geq 1}$ of relatively compact open subsets of $S$ with $\bigcup_{k \geq 1} V_{k} = S$
  such that, for every $k \geq 1$, $\mu^{V_{k}} \in \smooth_{00}$, $\mu_{n}^{V_{k}} \in \smooth_{00}$ for all $n \geq 1$,
  and 
  \begin{equation} \label{1. assum eq: short time control}
    \lim_{n \to \infty} 
    \inftynorm{\resolop_{1}\mu_{n}^{V_{k}} - \resolop_{1}\mu^{V_{k}}}{\mu_{n}^{V_{k}} + \mu^{V_{k}}}
    =0.
  \end{equation}
\end{assum}

Under Assumption~\ref{1. assum: stronger assumption} or \ref{1. assum: general assumption},
it is easy to see from the definition that $\mu_{n}$ and $\mu$ are smooth measures (see Definition \ref{dfn: smooth measure}).
We write $A_{n} = (A_{n}(t))_{t \geq 0}$ (resp.\ $A = (A(t))_{t \geq 0}$) 
for the PCAF associated with $\mu_{n}$ (resp.\ $\mu$) by the Revuz correspondence.
(The details regarding smooth measures and PCAFs are presented in Section~\ref{sec: PCAFs and smooth measures}.)
Our main results are the following.

\begin{thm} \label{1. thm: the main strong result}
  Under Assumption \ref{1. assum: stronger assumption},
  for any $T > 0$,
  \begin{equation}
    \lim_{n \to \infty}
    \sup_{x \in S}
    E_x\!\left[\sup_{0 \leq t \leq T} |A_{n}(t) - A(t)|^{2} \right]
    = 0.
  \end{equation}
\end{thm}

When Assumption \ref{1. assum: general assumption} is satisfied,
by additionally assuming the conservativeness of $X$,
the following weaker convergence of PCAFs is deduced.

\begin{thm} \label{1. thm: the main general result}
  Suppose that Assumption \ref{1. assum: general assumption} is satisfied
  and moreover $X$ is conservative, i.e., 
  \begin{equation}
    P_{x}(X_{t} \in S,\, \forall t \in [0, \infty)) = 1,
    \quad 
    \forall x \in S.
  \end{equation}
  Then, for any $\varepsilon, T > 0$ and $x \in S$,
  \begin{equation} \label{thm eq: convergence of PCAFs in probability}
    \lim_{n \to \infty}
    P_x\!
    \left(
      \sup_{0 \leq t \leq T}|A_{n}(t) - A(t)| > \varepsilon
    \right)
    =0.
  \end{equation}
  As a consequence,
  $A_{n} \xrightarrow{\mathrm{p}} A$ in the local uniform topology under $P_{x}$ for any $x \in S$.
\end{thm}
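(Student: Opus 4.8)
The plan is to deduce Theorem~\ref{1. thm: the main general result} from Theorem~\ref{1. thm: the main storng result} by localizing along the exhausting sequence $(V_{k})_{k \geq 1}$ supplied by Assumption~\ref{1. assum: general assumption}. For each $k \geq 1$ set
\begin{equation} \label{eq: plan killed PCAF}
  A_{n}^{(k)}(t) \coloneqq \int_{0}^{t} 1_{V_{k}}(X_{s})\, dA_{n}(s),
  \qquad
  A^{(k)}(t) \coloneqq \int_{0}^{t} 1_{V_{k}}(X_{s})\, dA(s).
\end{equation}
By the transformation rule for the Revuz correspondence under multiplying a PCAF by an indicator (recalled in Section~\ref{sec: PCAFs and smooth measures}), $A_{n}^{(k)}$ and $A^{(k)}$ are the PCAFs in the strict sense associated with $\mu_{n}^{V_{k}}$ and $\mu^{V_{k}}$, respectively. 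Since $\mu_{n}^{V_{k}}, \mu^{V_{k}} \in \smooth_{00}$ for all $n$ and \eqref{1. assum eq: short time control} holds, the sequence $(\mu_{n}^{V_{k}})_{n \geq 1}$ together with the measure $\mu^{V_{k}}$ satisfies Assumption~\ref{1. assum: stronger assumption} for each fixed $k$. Theorem~\ref{1. thm: the main storng result} therefore yields
\begin{equation} \label{eq: plan loc conv}
  \lim_{n \to \infty} \sup_{x \in S} E_{x}\!\left[ \sup_{0 \leq t \leq T} |A_{n}^{(k)}(t) - A^{(k)}(t)|^{2} \right] = 0
  \qquad \text{for every } T > 0 \text{ and } k \geq 1.
\end{equation}

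To pass from the killed PCAFs back to $A_{n}$ and $A$ I would use the exit times $\sigma_{k} \coloneqq \inf\{ t \geq 0 : X_{t} \notin V_{k} \}$. On the event $\{ \sigma_{k} > T \}$ we have $X_{s} \in V_{k}$ for all $s \leq T$, so the integrands in \eqref{eq: plan killed PCAF} equal $1$ throughout $[0,T]$, and hence $A_{n}(t) = A_{n}^{(k)}(t)$ and $A(t) = A^{(k)}(t)$ for every $t \leq T$ on this event. Combining this with the Markov inequality gives, for all $x \in S$, $\varepsilon > 0$, $T > 0$ and $k \geq 1$,
\begin{equation} \label{eq: plan split}
  P_{x}\!\left( \sup_{0 \leq t \leq T} |A_{n}(t) - A(t)| > \varepsilon \right)
  \leq P_{x}( \sigma_{k} \leq T )
  + \varepsilon^{-2} \sup_{y \in S} E_{y}\!\left[ \sup_{0 \leq t \leq T} |A_{n}^{(k)}(t) - A^{(k)}(t)|^{2} \right].
\end{equation}
Since $X$ is a Hunt process, the range $\{ X_{s} : 0 \leq s \leq t \}$ is relatively compact in $S$ on $\{ t < \zeta \}$ (left limits of $X$ stay in $S$ before the lifetime $\zeta$), and because each $V_{k}$ is relatively compact with $\bigcup_{k} V_{k} = S$ this forces $\sigma_{k} \uparrow \zeta$ $P_{x}$-a.s.; conservativeness gives $\zeta = \infty$ $P_{x}$-a.s., hence $\lim_{k \to \infty} P_{x}( \sigma_{k} \leq T ) = 0$. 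Given $\delta > 0$, I first choose $k$ large enough that $x \in V_{k}$ and $P_{x}(\sigma_{k} \leq T) < \delta / 2$, and then, by \eqref{eq: plan loc conv}, an $N$ making the second term of \eqref{eq: plan split} smaller than $\delta/2$ for all $n \geq N$. This establishes \eqref{thm eq: convergence of PCAFs in probability}.

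For the final assertion, recall that the local uniform topology on the space of continuous paths is metrized, e.g., by $d(f,g) = \sum_{m \geq 1} 2^{-m} \bigl( 1 \wedge \sup_{0 \leq t \leq m} |f(t) - g(t)| \bigr)$, so that $A_{n} \xrightarrow{\mathrm{P}} A$ in this topology under $P_{x}$ is equivalent to $\sup_{0 \leq t \leq T} |A_{n}(t) - A(t)| \to 0$ in $P_{x}$-probability for every $T > 0$, which is exactly what \eqref{thm eq: convergence of PCAFs in probability} asserts. I expect the only non-routine points to be the two Hunt-process facts invoked above, namely the identification of $A_{n}^{(k)}$ as the PCAF of $\mu_{n}^{V_{k}}$ and the convergence $\sigma_{k} \uparrow \zeta$; both are standard, and the uniformity in the starting point that makes the $\sup_{y \in S}$ in \eqref{eq: plan split} harmless is already built into the statement of Theorem~\ref{1. thm: the main storng result}. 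Everything else is elementary bookkeeping.
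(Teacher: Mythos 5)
Your proposal is correct and follows essentially the same route as the paper: localize via $A_{n}^{V_{k}}(t)=\int_{0}^{t}1_{V_{k}}(X_{s})\,dA_{n}(s)$, identify these as the PCAFs of $\mu_{n}^{V_{k}}$ via Lemma \ref{2. lem: restriction of smooth measures}, apply Theorem \ref{1. thm: the main storng result} for each fixed $k$, and control the error on $\{\tau_{V_{k}}\leq T\}$ using Markov's inequality and conservativeness. The only cosmetic difference is that the paper first passes through Proposition \ref{2. prop: maximum principle for difference of potentials} to record uniform convergence of the localized $1$-potentials, whereas you note directly that Assumption \ref{1. assum: general assumption} restricted to a fixed $k$ is exactly Assumption \ref{1. assum: stronger assumption} for $(\mu_{n}^{V_{k}})_{n\geq 1}$ and $\mu^{V_{k}}$; both are fine.
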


\begin{rem} \leavevmode
  \begin{enumerate} [label = (\roman*)]
    \item   
      When the $1$-potential density $r_1(x,y)$ is symmetric, i.e., $r_1(x,y)=r_1(y,x)$, and finite for each $(x,y)\in S\times S$,
      the Dirac measure $\delta_x$ at $x$ is a smooth measure for each $x$.
      The corresponding PCAF is the local time $(L(x,t))_{t\geq 0}$ at $x$.
      Theorem~\ref{1. thm: the main strong result} implies that, if $r_1(x,y)$ is jointly continuous in $(x,y)$,
      then the family of local times is jointly continuous in the following sense:
      \begin{equation}
      \lim_{y\to x} \sup_{z\in S} E_z\!\left[\sup_{0\leq t\leq T} |L(x,t)-L(y,t)|^2 \right]=0,
      \qquad
      \forall T>0.
      \end{equation}
      See \cite{Marcus_Rosen_06_Markov,Noda_pre_LocalTimes} for further results on the joint continuity and convergence of local times.
    \item   
      Theorem \ref{1. thm: the main strong result} implies that
      the result of \cite[Theorem 4.1]{Nishimori_Tomisaki_Tsuchida_Uemura_2025} holds for every starting point.
      (Recall that their result holds for quasi-every starting point.)
      Specifically,
      under Assumption \ref{1. assum: stronger assumption},
      there exists a subsequence $(n_{k})_{k \geq 1}$ such that, for all $x \in S$,
      $A_{n_{k}} \to A$ in the local uniform topology almost surely with respect to $P_{x}$.
      This is proven similarly to the fact that convergence in probability implies existence of a subsequence
      that converges almost surely.
    \item     
      In our approach, 
      it is not possible to replace $P_{x}$ of \eqref{thm eq: convergence of PCAFs in probability} with $\sup_{x \in S} P_{x}$.
      This is because the proof depends on 
      the fact that 
      the conservativeness of $X$ ensures that $P_{x}(\tau_{V_{k}} \leq T) \to 0$ as $k \to \infty$ for each $T \in (0, \infty)$ and $x \in S$,
      where $\tau_{V_{k}}$ denotes the exit time of $X$ from $V_{k}$.
      Obviously, this convergence does not hold uniformly with respect to $x \in S$.
      However, if the process is doubly Feller,
      then the convergence holds locally uniformly in $x$;
      see Theorem~\ref{thm: modification for doubly-feller proc}.
    \item    
      In many examples of interest,
      the transition density $p$ is jointly continuous.
      In Section~\ref{sec: sufficient conditions},
      we provide sufficient conditions for Assumptions~\ref{1. assum: stronger assumption} and \ref{1. assum: general assumption}
      that are tractable in applications.
    \end{enumerate}
\end{rem}

The proofs of Theorems \ref{1. thm: the main strong result} and \ref{1. thm: the main general result} are based 
on the following estimate 
for the distance between PCAFs in terms of the distance between the $1$-potentials of the associated smooth measures.
This result itself is new and can be useful for deducing rate of convergence of PCAFs.

\begin{thm} \label{3. thm: key inequality}
  Fix $\mu, \nu \in \smooth_{00}$.
  Let $A$ and $B$ be the associated PCAFs, respectively.
  It then holds that, for any $\alpha, T>0$,
  \begin{align}
    \sup_{x \in S}
    E_{x}\!
    \left[\sup_{0 \leq t \leq T} |A_{t} - B_{t}|^{2} \right]
    & \leq 
    18 (\supnorm{\resolop_{\alpha}\mu} + \|\resolop_{\alpha}\nu\|_{\infty}) \|\resolop_{\alpha} \mu - \resolop_{\alpha}\nu\|_{\infty} \\
    & \quad
    + 
    4e^{2T}(1- e^{-\alpha T})( \supnorm{\resolop_{1}\mu}^{2} + \|\resolop_{1}\nu\|_{\infty}^{2}).
  \end{align}
\end{thm}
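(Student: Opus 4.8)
The plan is to pass to the $\alpha$-discounted additive functionals, exploit the uniformly integrable martingales they generate, and then recover $A-B$ from its $\alpha$-discounted version in a way that avoids a factor $e^{\alpha T}$. \emph{Step 1 (generating martingales).} For a PCAF $C$ in the strict sense with Revuz measure $\sigma\in\smooth_{00}$, put $C^{(\alpha)}_{t}\coloneqq\int_{0}^{t}e^{-\alpha s}\,dC_{s}$. A standard computation from the Revuz formula together with the additivity $C_{s+u}=C_{s}+C_{u}\circ\theta_{s}$ (where $(\theta_{s})$ are the shift operators of $X$) and the Markov property shows that
\[
  M^{C,\alpha}_{t}\coloneqq e^{-\alpha t}\,\resolop_{\alpha}\sigma(X_{t})+C^{(\alpha)}_{t}
\]
is a uniformly integrable $P_{x}$-martingale for \emph{every} $x\in S$ — this is precisely where the absolute continuity/strict-sense framework upgrades matters from quasi-everywhere to everywhere — with $M^{C,\alpha}_{0}=\resolop_{\alpha}\sigma(x)$, $M^{C,\alpha}_{\infty}=C^{(\alpha)}_{\infty}$ (using that $e^{-\alpha t}\resolop_{\alpha}\sigma(X_{t})\to0$, which follows from $\supnorm{\resolop_{\alpha}\sigma}<\infty$), and $|M^{C,\alpha}_{t}|\le\supnorm{\resolop_{\alpha}\sigma}+C^{(\alpha)}_{\infty}$. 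Applying this to $A$ and $B$ and writing $w_{\alpha}\coloneqq\resolop_{\alpha}\mu-\resolop_{\alpha}\nu$, the difference $L^{(\alpha)}\coloneqq M^{A,\alpha}-M^{B,\alpha}$ is again a uniformly integrable martingale with $L^{(\alpha)}_{0}=w_{\alpha}(x)$, $L^{(\alpha)}_{\infty}=A^{(\alpha)}_{\infty}-B^{(\alpha)}_{\infty}$, and $A^{(\alpha)}_{t}-B^{(\alpha)}_{t}=L^{(\alpha)}_{t}-e^{-\alpha t}w_{\alpha}(X_{t})$.

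\emph{Step 2 (second-moment estimates).} For a single PCAF, writing $a_{t}\coloneqq C^{(\alpha)}_{t}$ one has $a_{\infty}^{2}=2\int_{0}^{\infty}(a_{\infty}-a_{s})\,da_{s}$ with $a_{\infty}-a_{s}=e^{-\alpha s}(C^{(\alpha)}_{\infty}\circ\theta_{s})$ by additivity; taking $E_{x}$ and replacing $C^{(\alpha)}_{\infty}\circ\theta_{s}$ by $E_{X_{s}}[C^{(\alpha)}_{\infty}]=\resolop_{\alpha}\sigma(X_{s})$ inside the random measure $dC_{s}$ gives
\[
  E_{x}\big[(C^{(\alpha)}_{\infty})^{2}\big]=2E_{x}\Big[\int_{0}^{\infty}e^{-2\alpha s}\resolop_{\alpha}\sigma(X_{s})\,dC_{s}\Big]\le2\supnorm{\resolop_{\alpha}\sigma}\,\resolop_{\alpha}\sigma(x).
\]
Expanding $(A^{(\alpha)}_{\infty}-B^{(\alpha)}_{\infty})^{2}=(A^{(\alpha)}_{\infty})^{2}-2A^{(\alpha)}_{\infty}B^{(\alpha)}_{\infty}+(B^{(\alpha)}_{\infty})^{2}$ and treating the cross term by an integration by parts of the product (so that only the two PCAFs $A$ and $B$, never a signed functional, ever appear) yields, uniformly in $x$,
\[
  E_{x}\big[(A^{(\alpha)}_{\infty}-B^{(\alpha)}_{\infty})^{2}\big]\le2\big(\supnorm{\resolop_{\alpha}\mu}+\|\resolop_{\alpha}\nu\|_{\infty}\big)\|w_{\alpha}\|_{\infty}.
\]
Specialising the first display to $\alpha=1$ and using $A_{T}\le e^{T}A^{(1)}_{\infty}$ and $\int_{0}^{\infty}e^{-2s}dA_{s}\le\int_{0}^{\infty}e^{-s}dA_{s}$, one also obtains $E_{x}[A_{T}^{2}]\le2e^{2T}\supnorm{\resolop_{1}\mu}^{2}$, and likewise for $B$.

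\emph{Step 3 (assembly).} For $0\le t\le T$ decompose
\[
  A_{t}-B_{t}=\big(A^{(\alpha)}_{t}-B^{(\alpha)}_{t}\big)+\int_{0}^{t}(1-e^{-\alpha s})\,d(A-B)_{s},
\]
where, since $s\mapsto1-e^{-\alpha s}$ is increasing and $A,B$ are increasing, the last term is in absolute value at most $(1-e^{-\alpha T})(A_{T}+B_{T})$. For the first summand, $|A^{(\alpha)}_{t}-B^{(\alpha)}_{t}|\le|L^{(\alpha)}_{t}|+\|w_{\alpha}\|_{\infty}$, so Doob's $L^{2}$-inequality applied to $L^{(\alpha)}$ together with Step 2 (and $\|w_{\alpha}\|_{\infty}^{2}\le(\supnorm{\resolop_{\alpha}\mu}+\|\resolop_{\alpha}\nu\|_{\infty})\|w_{\alpha}\|_{\infty}$ to absorb the lower-order term) bounds $\sup_{x}E_{x}[\sup_{t\le T}|A^{(\alpha)}_{t}-B^{(\alpha)}_{t}|^{2}]$ by a constant multiple of $(\supnorm{\resolop_{\alpha}\mu}+\|\resolop_{\alpha}\nu\|_{\infty})\|w_{\alpha}\|_{\infty}$; for the second summand, $(1-e^{-\alpha T})^{2}\le1-e^{-\alpha T}$ and the moment bounds of Step 2 produce the $e^{2T}(1-e^{-\alpha T})(\supnorm{\resolop_{1}\mu}^{2}+\|\resolop_{1}\nu\|_{\infty}^{2})$ contribution. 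Adding the two estimates — every bound being uniform in $x$, since $\resolop_{\alpha}\sigma(x)\le\supnorm{\resolop_{\alpha}\sigma}$ — and keeping track of the numerical constants gives the asserted inequality with the constants $18$ and $4e^{2T}$.

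The step I expect to be the main obstacle is Step 2: one must rigorously justify substituting $\resolop_{\alpha}\sigma(X_{s})$ for $C^{(\alpha)}_{\infty}\circ\theta_{s}$ \emph{inside} the random measure $dC_{s}$ — the predictable-projection identity for PCAFs — and to do this for an $L^{1}$-integrand rather than a bounded one, since $C^{(\alpha)}_{\infty}$ need not be bounded and only $E_{x}[C^{(\alpha)}_{\infty}]=\resolop_{\alpha}\sigma(x)<\infty$ is available; this calls for a truncation and monotone-class argument. A secondary delicate point is verifying that $M^{C,\alpha}$ is a genuine (not merely quasi-everywhere) martingale from every starting point, which is exactly where the standing absolute continuity hypothesis is used.
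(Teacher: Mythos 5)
Your route is genuinely different from the paper's and is essentially sound in structure. The paper does not discount the additive functional deterministically; instead it kills the process at an independent $\mathrm{Exp}(\alpha)$ time $T_\alpha$, notes that the killed process has $0$-potential density $\resol_{\alpha}$ and that the stopped PCAF is again a PCAF of the killed process with the same Revuz measure, and then applies a lemma for measures with bounded $0$-potential. That lemma is proved exactly by your Step 1--2 mechanism: the Kac-type product formula $E_{x}[A_{\infty}B_{\infty}]=\int \resol_{0}(x,y)\resolop_{0}\mu(y)\,\nu(dy)+\int \resol_{0}(x,y)\resolop_{0}\nu(y)\,\mu(dy)$, the closed martingale $E_{x}[A_{\infty}-B_{\infty}\mid\filt_{t}]=A_{t}-B_{t}+\resolop_{0}\mu(X_{t})-\resolop_{0}\nu(X_{t})$, and Doob's $L^{2}$ inequality, with the same $8\cdot 2+2=18$ count. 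Your deterministic discounting replaces the probabilistic subordination; the predictable-projection step you flag as the main obstacle is precisely \cite[Exercise 4.1.7]{Chen_Fukushima_12_Symmetric}, which the paper itself invokes for the $E_{x}[A_{T}^{2}]$ bound, and for non-negative integrands it follows by monotone convergence from the bounded case, so that is not a real obstacle.

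The one place your argument does not deliver the theorem as stated is the constant bookkeeping in Step 3. Your decomposition $A_{t}-B_{t}=(A^{(\alpha)}_{t}-B^{(\alpha)}_{t})+\int_{0}^{t}(1-e^{-\alpha s})\,d(A-B)_{s}$ is a sum of two random variables, so passing to second moments forces $(X+Y)^{2}\le 2X^{2}+2Y^{2}$ (or a Cauchy--Schwarz cross term that cannot be absorbed into both targets simultaneously), and you end up with $36$ and $8e^{2T}$ in place of $18$ and $4e^{2T}$; no choice of weights recovers both nominal constants from this additive split. The paper avoids the factor $2$ because its decomposition is over the \emph{disjoint events} $\{T_\alpha\ge T\}$ and $\{T_\alpha<T\}$: on the first event the killed PCAF coincides with $A$ up to time $T$, and on the second one bounds $\sup_{0\le t\le T}|A_{t}-B_{t}|^{2}$ by $(A_{T}+B_{T})^{2}\le 2(A_{T}^{2}+B_{T}^{2})$ and uses independence of $T_\alpha$ to extract exactly $P(T_\alpha<T)=1-e^{-\alpha T}$. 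So either adopt the killing construction for the final assembly, or state your conclusion with the constants $36$ and $8e^{2T}$, which is harmless for the applications but is not the inequality claimed.
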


The remainder of the article is organized as follows. 
In Section~\ref{sec: Preliminaries}, 
we clarify the framework for our main results and introduce PCAFs and smooth measures.
We also study $\alpha$-potentials of smooth measures.
In Section~\ref{sec: proof of main results},
we prove the main results.
In Section~\ref{sec: sufficient conditions},
we provide sufficient conditions for Assumptions~\ref{1. assum: stronger assumption} and \ref{1. assum: general assumption}.


\section{Preliminaries} \label{sec: Preliminaries}

This section is divided into three subsections.
In Section~\ref{sec: dual processes},
we set out the framework for the arguments of this article.
In Section~\ref{sec: PCAFs and smooth measures},
we introduce PCAFs and smooth measures.
Then, in Section~\ref{sec: potentials}, 
we study properties of $\alpha$-potentials of smooth measures.
Throughout this paper,
we fix a locally compact separable metrizable topological space $S$.
For a subset $E$ of $S$,
we write $\overline{E}$ for the closure of $E$ in $S$.
We define $S_{\Delta} = S \cup \{\Delta\}$ to be the one-point compactification of $S$.
(NB. If $S$ is compact, then we add $\Delta$ to $S$ as an isolated point.)
Any function $f$ defined on $S$ is regarded as a function on $S_{\Delta}$ by setting $f(\Delta) \coloneqq 0$.
We define $\supnorm{f} \coloneqq \sup\{|f(x)| \mid x \in S\}$
and, for a Borel measure $\nu$ on $S$ and a subset $E \subset S$, 
$\inftynorm{f}{\nu}$ to be the $L^{\infty}$-norm of $f$ with respect to $\nu$
(recall this from \eqref{eq: infty norm})
and $\nu^E$ to be the restriction of $\nu$ to $E$.
We use the convention $\inf \emptyset \coloneqq \infty$.
Given a topological space $T$,
we denote the Borel $\sigma$-algebra on $T$ by $\Borel(T)$.
The space $[0, \infty]$ is the usual one-point compactification of $[0, \infty)$.


\subsection{Standard processes admitting dual processes} \label{sec: dual processes}

In this subsection,
we clarify the framework
that is assumed in the rest of this paper.
We follow the setting of \cite{Revuz_70_Mesures}.

Let 
\begin{equation}
  X = (\Omega, \sigalg, (X_{t})_{t \in [0, \infty]}, (P_{x})_{x \in S_{\Delta}}, (\theta_{t})_{t \in [0,\infty]})
\end{equation}
be a standard process on $S$
(see \cite[Definition~A.1.23(i)]{Chen_Fukushima_12_Symmetric}).
Here, $(\Omega, \sigalg)$ denotes the measurable space, and $\theta_{t}$ denotes the shift operator,
i.e., a map $\theta_{t}\colon \Omega \to \Omega$ satisfying $X_{s} \circ \theta_{t} = X_{s+t}$ for any $s \in [0, \infty]$.
Note that $X_{\infty}(\omega) = \Delta$ for all $\omega \in \Omega$.
We write $\filt_{*} = (\filt_{t})_{t  \in [0,\infty]}$ 
for the minimum augmented admissible filtration (see \cite[p.\ 397]{Chen_Fukushima_12_Symmetric})
and $\zeta \coloneqq \inf\{t \in [0, \infty] \mid X_{t} = \Delta\}$
for the lifetime of $X$.
Throughout this paper, we assume the following duality and absolute continuity condition.

\begin{assum} \label{assum: heat kernel}
  There exists a standard process
  \begin{equation}
    \hat{X} = (\hat{\Omega},\hat{\sigalg}, (\hat{X}_{t})_{t \in [0, \infty]}, 
      (\hat{P}_{x})_{x \in S_{\Delta}}, (\hat{\theta}_{t})_{t \in [0,\infty]}),
  \end{equation} 
  a $\sigma$-finite Borel measure $m$ on $S$,
  and a Borel measurable function $p \colon (0, \infty) \times S \times S \to [0,\infty]$,
  called the \emph{heat kernel} of $X$, satisfying the following.
  \begin{enumerate} [label = \textup{(DAC\arabic*)}, leftmargin = *]
    \item For all $t > 0$ and $x \in S$, $P_x(X_t \in dy) = p_t(x, y)\, m(dy)$.
    \item For all $t > 0$ and $y \in S$, $\hat{P}_y(\hat{X}_t \in dx) = p_t(x, y)\, m(dx)$.
    \item For any $s, t > 0$ and $x, y \in S$, 
      \begin{equation} \label{2. eq: C-K equation}
        p_{t+s}(x, y) = \int_{S} p_{t}(x, z) p_{s}(z, y)\, m(dz).
      \end{equation}
  \end{enumerate}
\end{assum}

A sufficient condition for Assumption~\ref{assum: heat kernel} can be found in \cite[Theorem~1]{Yan_88_A_formula}.
We note that, by \cite[Corollary~1.12 and Remark~1.13 in Chapter~VI]{Blumenthal_Getoor_68_Markov}, 
the measure $m$ is an excessive reference measure of $X$
(see \cite[Definitions~1.1 and~1.10 in Chapters~V and~VI, respectively]{Blumenthal_Getoor_68_Markov} for these definitions).
Under Assumption~\ref{assum: heat kernel}, for each $\alpha \geq 0$, we define the \emph{$\alpha$-potential density} by
\begin{equation}
  \resol_{\alpha}(x,y) 
  \coloneqq 
  \int_{0}^{\infty} e^{-\alpha t} p_{t}(x,y)\, dt,
  \qquad 
  x,y \in S.
\end{equation}

\begin{rem}
  The class of processes satisfying Assumption~\ref{assum: heat kernel} is sufficiently large.
  For example,
  if $X$ is a Hunt process associated with a regular Dirichlet form (cf.\ \cite{Fukushima_Oshima_Takeda_11_Dirichlet})
  admitting a transition density,
  then it satisfies Assumption~\ref{assum: heat kernel}.
  (Indeed, one can choose $\hat{X}$ to be $X$ itself.)
\end{rem}

By using \eqref{2. eq: C-K equation},
it is easy to see that, for any $x, y \in S$ and $\alpha \geq \beta > 0$, 
  \begin{align} 
    \resol_{\beta}(x,y)
    &= 
    \resol_{\alpha}(x,y) + (\alpha - \beta) \int_{S} \resol_{\beta}(x, z) \resol_{\alpha}(z, y)\, m(dz) \\
    &= 
    \resol_{\alpha}(x,y) + (\alpha - \beta) \int_{S} \resol_{\alpha}(x, z) \resol_{\beta}(z, y)\, m(dz)
    \label{2. eq: resolvent identity}
  \end{align}
(cf.\ \cite[Lemma 3.3.4]{Marcus_Rosen_06_Markov}).
For every Borel subset $E$ of $S$,
we write $\sigma_E$ and $\tau_{E}$ for the first hitting time and exit time of $E$ by $X$, i.e., 
\begin{equation} \label{2. eq: hitting time}
  \sigma_{E} 
  \coloneqq 
  \inf\{t \in (0, \infty) \mid X_{t} \in E\},
\end{equation}
\begin{equation} \label{2. eq: exit time}
  \tau_{E} 
  \coloneqq 
  \inf\{t \in [0, \infty) \mid X_{t} \notin E\}.
\end{equation}


\subsection{PCAFs and smooth measures} \label{sec: PCAFs and smooth measures}

In this subsection,
we introduce the main objects: PCAFs and smooth measures.
We proceed with the same setting as in Section~\ref{sec: dual processes}.

\begin{dfn} [{PCAF}] \label{dfn: PCAF}
  Let $A = (A_{t})_{t \geq 0}$ be an $\filt_{*}$-adapted non-negative stochastic process. 
  It is called a \textit{positive continuous additive functional (PCAF)} of $X$ if 
  there exists a set $\Lambda \in \filt_{\infty}$, called a \textit{defining set} of $A$, satisfying the following.
  \begin{enumerate} [label = (\roman*)]
    \item It holds that $P_{x}(\Lambda) = 1$ for all $x \in S$ and $\theta_t(\Lambda) \subseteq \Lambda$ for all $t \geq 0$.
    \item 
      For every $\omega \in \Lambda$,
      $A_{0}(\omega) = 0$,
      the function $t \mapsto A_{t}(\omega)$ from $[0,\infty)$ to $[0, \infty]$ is continuous,
      $A_{t}(\omega) < \infty$ for all $t < \zeta(\omega)$, 
      $A_{t}(\omega) = A_{\zeta(\omega)}(\omega)$ for all $t \geq \zeta(\omega)$,
      and $A_{t+s}(\omega) = A_{t}(\omega) + A_{s}(\theta_{t}\omega)$
      for all $s, t \geq 0$.
  \end{enumerate} 
  We set $A_{\infty} \coloneqq \lim_{t \to \infty} A_{t}$ on $\Lambda$ 
  and $A_{\infty} \coloneqq 0$ otherwise.
\end{dfn}

\begin{rem}
  The above definition follows that of PCAFs \emph{in the strict sense} in \cite[p.\ 222]{Fukushima_Oshima_Takeda_11_Dirichlet}.
  Revuz's definition of PCAF \cite[Section~I.3]{Revuz_70_Mesures} is slightly weaker, and the existence of a defining set is not required. 
  However, it is not difficult to find a version of his PCAF which is a PCAF in the strict sense.
  Indeed, by \cite[Theorem~2.1 in Chapter~V]{Blumenthal_Getoor_68_Markov}, any PCAF is equivalent to a so-called perfect PCAF
  (see \cite[Definition~1.3 in Chapter~IV]{Blumenthal_Getoor_68_Markov} for its definition).
  In that theorem, such a perfect PCAF is constructed from PCAFs constructed in \cite[Theorem~3.16 in Chapter~IV]{Blumenthal_Getoor_68_Markov},
  which can be easily verified to be PCAFs in the strict sense.
  Thus, one can deduce that the perfect PCAF constructed in \cite[Theorem~2.1 in Chapter~V]{Blumenthal_Getoor_68_Markov} is a PCAF in the strict sense.
\end{rem}

For the definition of smooth measures,
recall the notion of semipolar sets from \cite[Definition~3.1 in Chapter~II]{Blumenthal_Getoor_68_Markov}.
Also, recall from \eqref{eq: restriction and potential} that,
for a Borel measure $\mu$ on $S$,
the $\alpha$-potential $\resolop_{\alpha}\mu \colon S \to [0, \infty]$ of $\mu$ is defined by 
\begin{equation} \label{2. eq: alpha-potential}
  \resolop_{\alpha}\mu(x) 
  \coloneqq 
  \int_{S} \resol_{\alpha}(x, y)\, \mu(dy).
\end{equation}

\begin{dfn} [{Smooth measure, \cite[Theorem~VI.1]{Revuz_70_Mesures}}] \label{dfn: smooth measure}
  A Borel measure $\mu$ on $S$ is said to be a \textit{smooth measure} if it satisfies the following conditions:
  \begin{enumerate} [label = (\roman*)]
    \item \label{dfn item: charging no set of zero capacity}
      $\mu$ charges no semipolar sets, i.e., 
      $\mu(E) = 0$ for any semipolar $E \in \Borel(S)$;
    \item \label{dfn item: existence of nest}
      there exists an increasing sequence $(E_{n})_{n \geq 1}$ of Borel subsets 
      such that, $S = \bigcup_{n \geq 1} E_n$, $\mu(E_{n}) < \infty$ for each $n$, 
      \begin{equation}
        \|\resolop_1 \mu^{E_n}\|_\infty
        =
        \sup_{x \in S} \int_{E_n} \resol_1(x,y)\, \mu(dy) < \infty, \quad \forall n \geq 1,
      \end{equation}
      and 
      $P_x( \lim_{n \to \infty} \tau_{E_n} \geq \zeta) = 1$ for all $x \in S$.
  \end{enumerate}
  We write $\smooth_{1}$ for the collection of smooth measures,
  and write $\smooth_{00}$ (resp.\ $\smooth_{00}^{(0)}$) 
  for the collection of finite Borel measures $\mu$ on $S$ 
  such that $\mu$ charges no semipolar sets and satisfies $\supnorm{\resolop_{1}\mu} < \infty$ (resp.\ $\supnorm{\resolop_{0}\mu} < \infty$).
  (NB.\ It holds that $\smooth_{00}^{(0)} \subseteq \smooth_{00} \subseteq \smooth_{1}$.)
\end{dfn}

\begin{rem}
 The above definition of smooth measures coincides with that of smooth measures \emph{in the strict sense} in the Dirichlet form setting (see  \cite[p.~238]{Fukushima_Oshima_Takeda_11_Dirichlet}).
\end{rem}

It is known that there is a one-to-one correspondence between PCAFs and smooth measures,
which is often referred to as the \textit{Revuz correspondence} due to Revuz \cite{Revuz_70_Mesures}.
Below, note that two PCAFs $A$ and $B$ are said to be equivalent 
if there exists a set $\Lambda \in \filt_\infty$ which is a defining set of both $A$ and $B$,
satisfies $P_x(\Lambda) = 1$ for all $x \in S$, and $A_t(\omega) = B_t(\omega)$ for all $(t, \omega) \in [0,\infty) \times \Lambda$.

\begin{thm} [{\cite{Revuz_70_Mesures}; see also \cite[Section~4]{Kajino_Noda_pre_Generalized}}] \label{thm: Revuz correspond}
  There is a one-to-one correspondence between equivalence classes of PCAFs $(A_{t})_{t \geq 0}$
  and smooth measures $\mu$ characterized by the following relation.
  \begin{enumerate} [label = \textup{(RC)}, leftmargin = *]
    \item \label{2. item: Revuz correspondence}
      For any $\alpha > 0$ and non-negative Borel measurable function $f$ on $S$,
      \begin{equation} \label{2. eq: Revuz correspondence}
        E_x\!\left[
          \int_{0}^{\infty} e^{-\alpha t} f(X_t)\, dA_t
        \right]
        = 
        \int_{S} \resol_{\alpha}(x,y) f(y)\, \mu(dy),
        \quad 
        \forall x \in S.
      \end{equation}
  \end{enumerate}
\end{thm}

For a PCAF $A$,
we refer to the unique Borel measure $\mu \in \smooth_{1}$ satisfying \ref{2. item: Revuz correspondence} as the measure associated with $A$.
Similarly,
for $\mu \in \smooth_{1}$,
we refer to a PCAF $A$ satisfying \ref{2. item: Revuz correspondence},
which is unique up to the equivalence relation,
as the PCAF associated with $\mu$.
The following result will be used in the proof of our main results.

\begin{lem} [{\cite[Theorem~A.3.5(iii)]{Chen_Fukushima_12_Symmetric}}]\label{2. lem: restriction of smooth measures}
  Fix $\mu \in \smooth_{1}$ and a Borel subset $E \subseteq S$.
  Let $A=(A_{t})_{t \geq 0}$ be the associated PCAF.
  Then $(B_{t})_{t \geq 0}$ defined by setting $B_{t} \coloneqq \int_{0}^{t} 1_{E}(X_{s})\, dA_{s}$ is a PCAF,
  and the associated smooth measure is $\mu^{E}$.  
\end{lem}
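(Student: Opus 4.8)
The plan is to verify that $B$ fulfils the axioms of a PCAF in the strict sense and then to read off its associated measure from the Revuz characterization \ref{2. item: Revuz correspondence}. First I would fix a defining set $\Lambda \in \filt_{\infty}$ of $A$ with $P_{x}(\Lambda) = 1$ for every $x \in S$, available because $A$ is in the strict sense (so its exceptional set is empty), and argue that the same $\Lambda$ works for $B$. Non-negativity of $B$ is immediate, and $\filt_{*}$-adaptedness follows from progressive measurability of $s \mapsto 1_{E}(X_{s})$ (using $E \in \Borel(S)$ and the right-continuity of $X$), which makes the Lebesgue--Stieltjes integral $B_{t} = \int_{(0,t]} 1_{E}(X_{s})\, dA_{s}$ measurable with respect to $\filt_{t}$. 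On $\Lambda$ one has $B_{0} = 0$; moreover $t \mapsto B_{t}(\omega)$ is the distribution function of the measure $1_{E}(X_{\cdot}(\omega))\, dA_{\cdot}(\omega)$ on $[0,\infty)$, which is atomless (since $A_{\cdot}(\omega)$ is continuous) and finite on $[0,t]$ for $t < \zeta(\omega)$ (since $B_{t}(\omega) \le A_{t}(\omega) < \infty$), so $t \mapsto B_{t}(\omega)$ is continuous with $B_{t}(\omega) < \infty$ for $t < \zeta(\omega)$; and $dA_{\cdot}(\omega)$ charges no subset of $[\zeta(\omega), \infty)$, so $B_{t}(\omega) = B_{\zeta(\omega)}(\omega)$ for $t \ge \zeta(\omega)$.

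The one step that needs genuine care is the additive functional identity $B_{t+s} = B_{t} + B_{s}\circ\theta_{t}$ on $\Lambda$. For $\omega \in \Lambda$ and $s,t \ge 0$, I would split off the contribution over $(t, t+s]$ and substitute $u = t+v$:
\[
  B_{t+s}(\omega) - B_{t}(\omega)
  = \int_{(t, t+s]} 1_{E}(X_{u}(\omega))\, dA_{u}(\omega)
  = \int_{(0,s]} 1_{E}(X_{t+v}(\omega))\, dA_{t+v}(\omega).
\]
Here $X_{t+v}(\omega) = X_{v}(\theta_{t}\omega)$ holds by the defining property of the shift operator, and the Lebesgue--Stieltjes measure in $v$ induced by $v \mapsto A_{t+v}(\omega)$ agrees with the one induced by $v \mapsto A_{v}(\theta_{t}\omega)$, because these two functions of $v$ differ only by the additive constant $A_{t}(\omega)$ (which is precisely the additivity of $A$, valid for all $v \ge 0$ since $\omega \in \Lambda$). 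Hence the last integral equals $\int_{(0,s]} 1_{E}(X_{v}(\theta_{t}\omega))\, dA_{v}(\theta_{t}\omega) = B_{s}(\theta_{t}\omega)$, which is the required identity; thus $B$ is a PCAF in the strict sense. (Note that no shift-invariance of $\Lambda$ is needed, since $B_{s}(\theta_{t}\omega)$ is by definition the above integral evaluated at the path $\theta_{t}\omega$.)

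Finally I would identify the Revuz measure. For any $\alpha > 0$, non-negative Borel $f$ on $S$, and $x \in S$, the elementary identity $\int_{0}^{\infty} h_{s}\, dB_{s} = \int_{0}^{\infty} h_{s} 1_{E}(X_{s})\, dA_{s}$ for non-negative measurable $h$ (immediate from $dB = 1_{E}(X_{\cdot})\, dA$), combined with \ref{2. item: Revuz correspondence} applied to $A$ with integrand $f 1_{E}$, gives
\[
  E_{x}\!\left[\int_{0}^{\infty} e^{-\alpha t} f(X_{t})\, dB_{t}\right]
  = E_{x}\!\left[\int_{0}^{\infty} e^{-\alpha t} (f 1_{E})(X_{t})\, dA_{t}\right]
  = \int_{S} \resol_{\alpha}(x,y)\, f(y) 1_{E}(y)\, \mu(dy)
  = \int_{S} \resol_{\alpha}(x,y)\, f(y)\, \mu^{E}(dy).
\]
Since $B$ has already been shown to be a PCAF in the strict sense, the Revuz correspondence attaches to it a unique smooth measure in the strict sense, and this computation identifies that measure as $\mu^{E}$; in particular $\mu^{E} \in \smooth_{1}$, completing the proof. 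I expect the change of variables in the Stieltjes integral combined with the shift relation in the additivity step to be the only point requiring attention; everything else is routine bookkeeping.
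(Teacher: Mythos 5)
Your argument is correct. Note that the paper gives no proof of this lemma at all: it is quoted directly from \cite[Theorem A.3.5(iii)]{Chen_Fukushima_12_Symmetric}, so there is nothing internal to compare against. Your self-contained verification is the standard one and all the delicate points are handled properly: the change of variables in the Stieltjes integral together with $X_{t+v}(\omega)=X_v(\theta_t\omega)$ and the fact that $v\mapsto A_{t+v}(\omega)$ and $v\mapsto A_v(\theta_t\omega)$ induce the same measure (by additivity of $A$ on its defining set) gives the additivity of $B$ without needing shift-invariance of $\Lambda$; continuity, finiteness before $\zeta$, and constancy after $\zeta$ all follow from the corresponding properties of $A$; and the identification of the Revuz measure via \ref{2. item: Revuz correspondence} applied to $A$ with integrand $f1_E$, combined with the uniqueness in the correspondence theorem, pins down $\mu^E$ as the associated measure and shows $\mu^E\in\smooth_1$.
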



\subsection{Properties for \texorpdfstring{$\alpha$-potentials}{alpha-potentials}} \label{sec: potentials}

In this subsection, we study properties of $\alpha$-potentials introduced in \eqref{2. eq: alpha-potential}.
The main results are domination principles for $\alpha$-potentials presented in Propositions~\ref{prop: domination principle} and~\ref{2. prop: maximum principle for difference of potentials}.
We also provide a useful sufficient condition for a Radon measure to be smooth in Proposition~\ref{2. prop: smoothness condition}.
We proceed with the same setting as in Section~\ref{sec: dual processes}.

\begin{lem} \label{lem: resolvent identity for potentials}
  Fix a Borel measure $\mu$ on $S$.
  For any $\alpha \geq \beta > 0$, we have that 
  \begin{align} 
    \resolop_{\beta}\mu(x) 
    &=
    \resolop_{\alpha}\mu(x) 
    +
    (\alpha -\beta) 
    \int_{S} \resol_{\beta}(x, z)\, \resolop_{\alpha}\mu(z)\, m(dz)\\
    &=
    \resolop_{\alpha}\mu(x) 
    +
    (\alpha -\beta) 
    \int_{S} \resol_{\alpha}(x,z)\, \resolop_{\beta}\mu(z)\, m(dz)
    \label{2. lem eq: resolvent identity for potentials}
  \end{align}
  In particular, if $\mu \in \smooth_{00}$,
  then the above equations hold for any $\alpha, \beta > 0$.
  As a consequence of \eqref{2. lem eq: resolvent identity for potentials},
  if $\supnorm{\resolop_{\alpha}\mu} < \infty$ for some $\alpha > 0$,
  then $\supnorm{\resolop_{\beta}\mu} < \infty$ for any $\beta > 0$.
\end{lem}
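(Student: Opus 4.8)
The plan is to prove the resolvent identity \eqref{2. lem eq: resolvent identity for potentials} for potentials by starting from the pointwise resolvent identity \eqref{2. eq: resolvent identity} for the densities $\resol_{\alpha}(x,y)$, which is already available in the excerpt. Fix $\alpha \geq \beta > 0$ and a Borel measure $\mu$. For each fixed $x \in S$, integrate \eqref{2. eq: resolvent identity} against $\mu(dy)$: since every term is non-negative, Tonelli's theorem applies without any integrability hypothesis, so
\begin{align}
  \resolop_{\beta}\mu(x)
  &= \int_{S} \resol_{\beta}(x,y)\, \mu(dy) \\
  &= \int_{S} \resol_{\alpha}(x,y)\, \mu(dy)
  + (\alpha - \beta) \int_{S} \left( \int_{S} \resol_{\beta}(x,z)\, \resol_{\alpha}(z,y)\, m(dz) \right) \mu(dy),
\end{align}
and another application of Tonelli to swap the $m(dz)$ and $\mu(dy)$ integrals in the last term turns it into $(\alpha - \beta) \int_{S} \resol_{\beta}(x,z)\, \resolop_{\alpha}\mu(z)\, m(dz)$. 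This gives the first equality; the second follows identically from the other form of \eqref{2. eq: resolvent identity}. Both sides may a priori be $+\infty$, but the identity holds in $[0,\infty]$ regardless, which is all that is claimed.

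Next I would handle the extension to arbitrary $\alpha, \beta > 0$ when $\mu \in \smooth_{00}$. The case $\alpha \geq \beta$ is already done, so suppose $\beta > \alpha > 0$. Since $\mu \in \smooth_{00}$ means $\supnorm{\resolop_{1}\mu} < \infty$, I first need to know $\resolop_{\gamma}\mu$ is finite (indeed bounded) for the relevant indices; this is exactly the ``consequence'' clause, so I should prove that first and then feed it back in. So the logical order is: (1) prove the $\alpha \geq \beta$ identity in $[0,\infty]$; (2) use it to prove the boundedness propagation; (3) use boundedness to justify rearranging the identity for $\beta > \alpha$.

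For step (2): suppose $\supnorm{\resolop_{\alpha}\mu} = M < \infty$ for some $\alpha > 0$. For $\beta \geq \alpha$, the first displayed equality (already proved, and all terms non-negative) gives $\resolop_{\beta}\mu(x) \leq \resolop_{\alpha}\mu(x) \leq M$, so boundedness is immediate for larger indices. For $\beta < \alpha$, rewrite the proved identity (with the roles $\alpha \geq \beta$, i.e.\ our $\alpha$ in the ``$\alpha$'' slot and $\beta$ in the ``$\beta$'' slot) as
\begin{equation}
  \resolop_{\beta}\mu(x) - (\alpha - \beta)\int_{S} \resol_{\alpha}(x,z)\,\resolop_{\beta}\mu(z)\, m(dz) = \resolop_{\alpha}\mu(x).
\end{equation}
This is a (sub-)resolvent-type equation for the function $u \coloneqq \resolop_{\beta}\mu$; one standard way to extract a bound is to note $\int_{S}\resol_{\alpha}(x,z)\,m(dz) = \int_{0}^{\infty}e^{-\alpha t}P_{x}(X_{t}\in S)\,dt \leq 1/\alpha$, and then iterate: writing $u = \resolop_{\alpha}\mu + (\alpha-\beta)\resolop_{\alpha}(u\,m)$ where $\resolop_{\alpha}(u\,m)(x) = \int \resol_{\alpha}(x,z)u(z)m(dz)$, substitute repeatedly. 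Since $\|\resolop_{\alpha}(\,\cdot\, m)\|$ has operator norm $\le (\alpha-\beta)/\alpha < 1$ on bounded functions (a priori $u$ could be infinite, so one should truncate $u \wedge n$, get a uniform bound, and let $n\to\infty$ by monotone convergence), the Neumann series converges and yields $\supnorm{u} \le M/(1 - (\alpha-\beta)/\alpha) = M\alpha/\beta < \infty$. Once boundedness is known for all indices, step (3) — the identity for $\beta > \alpha$ — follows by simply solving the already-established $\alpha$-vs-$\beta$ identity (now with finite, integrable terms so subtraction is legitimate) for $\resolop_{\beta}\mu$ in terms of $\resolop_{\alpha}\mu$, or more cleanly by applying the proved identity with the index pair $(\beta, \alpha)$ where now $\beta \geq \alpha$, reading off the symmetric statement.

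The main obstacle is step (2), the boundedness propagation to \emph{smaller} $\alpha$: the Tonelli manipulations in steps (1) and (3) are routine, but moving from a bound on $\resolop_{\alpha}\mu$ to a bound on $\resolop_{\beta}\mu$ with $\beta < \alpha$ genuinely uses the sub-Markovian property $\int \resol_{\alpha}(x,z)m(dz) \le 1/\alpha$ and a contraction/Neumann-series argument (with a truncation to handle the a priori possibility that $\resolop_{\beta}\mu \equiv \infty$ somewhere). I would present that iteration carefully, since it is the one place where positivity alone does not suffice.
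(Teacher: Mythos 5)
Your step (1) --- integrating the pointwise resolvent identity \eqref{2. eq: resolvent identity} against $\mu(dy)$ and swapping the order of integration by Tonelli, with both equalities read in $[0,\infty]$ --- is exactly the paper's derivation of \eqref{2. lem eq: resolvent identity for potentials}, and your overall ordering (identity for $\alpha\geq\beta$ first, then boundedness propagation, then subtraction to cover the remaining index pairs) is also the intended logic.

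The gap is in step (2), the case $\beta<\alpha$. You set up the equation $u=\resolop_{\alpha}\mu+(\alpha-\beta)\int_{S}\resol_{\alpha}(\cdot,z)\,u(z)\,m(dz)$ with the \emph{unknown} $u=\resolop_{\beta}\mu$ inside the integral and try to solve it by a Neumann series. Iterating this equality $N$ times leaves a remainder $(\alpha-\beta)^{N+1}K^{N+1}u$ (with $K$ the kernel operator $f\mapsto\int_{S}\resol_{\alpha}(\cdot,z)f(z)\,m(dz)$), and you cannot show this remainder tends to zero without an a priori finiteness bound on $u$ --- the very thing being proved. The proposed truncation to $u\wedge n$ does not close the argument either: $u\wedge n$ satisfies no useful self-referential inequality, because replacing $u$ by $u\wedge n$ inside the integral bounds that term from \emph{below}, not above. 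The repair is to use the \emph{first} form of \eqref{2. lem eq: resolvent identity for potentials} instead, in which the integrand is the \emph{known} bounded function $\resolop_{\alpha}\mu$ integrated against $\resol_{\beta}(x,z)\,m(dz)$, whose total mass is at most $\beta^{-1}$; this gives at once
\begin{equation}
  \resolop_{\beta}\mu(x)
  \leq
  \supnorm{\resolop_{\alpha}\mu}+(\alpha-\beta)\,\beta^{-1}\supnorm{\resolop_{\alpha}\mu}
  =\frac{\alpha}{\beta}\supnorm{\resolop_{\alpha}\mu}<\infty,
\end{equation}
with no iteration and no truncation. This one-line estimate is precisely the paper's argument (and is why the lemma records both forms of the identity). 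Once finiteness is known for all indices, your step (3) goes through as you describe.
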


\begin{proof}
  Equation \eqref{2. lem eq: resolvent identity for potentials} follows from \eqref{2. eq: resolvent identity}.
  If $\mu \in \smooth_{00}$,
  then the terms are all finite and so we can consider subtraction,
  which yields that \eqref{2. lem eq: resolvent identity for potentials} holds for any $\alpha, \beta > 0$.
  Noting that 
  \begin{equation}
    \int_{S} \resol_{\beta}(x, z)\, m(dz) 
    = 
    \int_{0}^{\infty} \int_{S} e^{-\beta t} p_{t}( x, z)\, m(dz)\, dt  
    \leq 
    \int_{0}^{\infty} e^{-\beta t}\, dt   
    = \beta^{-1},
  \end{equation}
  we obtain the last assertion of the result.
\end{proof}

Recall that a non-negative Borel measurable function $f$ on $S$ is said to be \emph{$\alpha$-excessive}
if $e^{-\alpha t} E_{x}[f(X_{t})] \leq f(x)$ for all $t \geq 0$ and $x \in S$
and $e^{-\alpha t} E_{x}[f(X_{t})] \uparrow f(x)$ as $t \downarrow 0$ for all $x \in S$.
Below, we collect some fundamental results about excessive functions.

\begin{lem} \label{2. lem: basics about excessiveness}
  Fix $\alpha > 0$.
  The following statements hold.
  \begin{enumerate} [label = \textup{(\roman*)}]
    \item \label{2. lem item: constant is excessive}
      Any non-negative constant function is $\alpha$-excessive.
    \item \label{2. lem item: operation on quasi-excessive functions}
      If $f$ and $g$ are $\alpha$-excessive, then so is $f + g$.
    \item \label{2. lem item: potential is excessive}
      For any Borel measure $\mu$, $\resolop_{\alpha}\mu$ is $\alpha$-excessive.
  \end{enumerate}
\end{lem}

\begin{proof}
  See \cite[Proposition~2.2 in Chapter~II]{Blumenthal_Getoor_68_Markov}
  for the first two assertions. 
  As for the last assertion,
  by using \eqref{2. eq: C-K equation},
  we deduce that 
  \begin{equation}
    e^{-\alpha t} E_{x}[\resolop_{\alpha}\mu (X_{t})]
    = 
    \int_{S}
    \int_{t}^{\infty} e^{-\alpha s} p(s, x, z)\, ds \mu(dz),
  \end{equation}
  which yields that $\resolop_{\alpha}\mu$ is $\alpha$-excessive.
\end{proof}

We are now ready to prove domination principles for $\alpha$-potentials.

\begin{prop} \label{prop: domination principle}
  Fix $\alpha > 0$.
  Let $\mu$ be a smooth measure and $f$ be an $\alpha$-excessive function.
  If $\resolop_\alpha \mu \leq f$, $\mu$-a.e., then $\resolop_\alpha \mu (x) \leq f(x)$ for all $x \in S$.
\end{prop}

\begin{proof}
  When $\mu$ is absolutely continuous with respect to the reference measure $m$,
  the desired result can be found in \cite[Theorem~3 in Section~3.4]{Chung_Walsh_05_Markov}.
  Using the Revuz correspondence \eqref{2. eq: Revuz correspondence},
  the proof of \cite[Theorem~3 in Section~3.4]{Chung_Walsh_05_Markov}
  can be adapted to the present setting.
  Fix $x \in S$.
  Define 
  \begin{equation}
    B \coloneqq \{y \in S \mid \resolop_\alpha \mu (y) \leq f(y) \}.
  \end{equation}
  Since $\resolop_\alpha \mu \leq f$, $\mu$-a.e.,
  we have $\mu(dy) = 1_B(y)\, \mu(dy)$.
  From Lemma~\ref{2. lem: restriction of smooth measures},
  it follows that
  \begin{equation}
    A_t = \int_0^t 1_B(X_s)\, dA_s 
    \quad 
    \text{for all $t \geq 0$, $P_x$-a.s.}
  \end{equation}
  Using this and \eqref{2. eq: Revuz correspondence}, we deduce that 
  \begin{equation}
    \resolop_\alpha \mu(x) 
    = 
    E_x\!\left[\int_0^\infty e^{-\alpha t} dA_t\right]
    = 
    E_x\!\left[\int_0^\infty e^{-\alpha t} 1_B(X_t)\, dA_t\right]
    =
    E_x\!\left[\int_{\sigma_B}^\infty e^{-\alpha t} dA_t\right].
  \end{equation}
  By \cite[Theorem~10.19 in Chapter~I]{Blumenthal_Getoor_68_Markov},
  there exists an increasing sequence $(K_n)_{n \geq 1}$ of compact subsets 
  such that $K_n \subseteq B$ and $\sigma_{K_n} \downarrow \sigma_B$, $P_x$-a.s.
  The monotone convergence theorem and the strong Markov property yield that 
  \begin{equation} 
    E_x\!\left[\int_{\sigma_B}^\infty e^{-\alpha t} dA_t\right]
    =
    \lim_{n \to \infty}
    E_x\!\left[\int_{\sigma_{K_n}}^\infty e^{-\alpha t} dA_t\right]
    = 
    \lim_{n \to \infty}
    E_x\!\left[ 1_{\{\sigma_{K_n} < \infty\}}\, e^{-\alpha \sigma_{K_n}} \resolop_\alpha \mu(X_{\sigma_{K_n}})\right].
  \end{equation}
  On the event $\{\sigma_{K_n} < \infty\}$,
  since $X_{\sigma_{K_n}} \in K_n \subseteq B$,
  we have $\resolop_\alpha \mu(X_{\sigma_{K_n}}) \leq f(X_{\sigma_{K_n}})$.
  Thus, we obtain that 
  \begin{equation} \label{pf: 1. domination principle}
    \resolop_\alpha \mu(x) 
    \leq    
    \liminf_{n \to \infty}
    E_x\!\left[ 1_{\{\sigma_{K_n} < \infty\}}\, e^{-\alpha \sigma_{K_n}} f(X_{\sigma_{K_n}})\right].
  \end{equation}
  Set $Y_t \coloneqq e^{-\alpha t} f(X_t)$ for all $t \geq 0$.
  Then $(Y_t)_{t \geq 0}$ is a non-negative $(\filt_t)_{t \geq 0}$-supermartingale
  (see \cite[Proposition~1]{Chung_Walsh_05_Markov}).
  Write $Y_\infty$ for the almost-sure limit of $Y_t$ as $t \to \infty$,
  guaranteed by the martingale convergence theorem.
  Since $f$ is finely continuous,
  $(Y_t)_{t \geq 0}$ is right-continuous
  (cf.\ \cite[Proposition~4.2 and~Theorem~4.8 in Chapter~II]{Blumenthal_Getoor_68_Markov}).
  Thus, we can apply the optional stopping theorem (cf.\ \cite[Theorem~4 in Section~1.4]{Chung_Walsh_05_Markov})
  to obtain that 
  \begin{equation} \label{pf: 2. domination principle}
    f(x) 
    = E_x[Y_0] 
    \geq E_x[Y_{\sigma_{K_n}}] 
    \geq E_x\!\left[1_{\{\sigma_{K_n} < \infty\}}\, e^{-\alpha \sigma_{K_n}} f(X_{\sigma_{K_n}})\right]
  \end{equation}
  for all $n$.
  Now the desired result is immediate from \eqref{pf: 1. domination principle} and \eqref{pf: 2. domination principle}.
\end{proof}

The following is a domination principle on differences of $\alpha$-potentials.

\begin{prop} \label{2. prop: maximum principle for difference of potentials}
  Fix $\alpha > 0$.
  For any $\mu, \nu \in \smooth_{00}$,
  it holds that 
  \begin{equation}
    \supnorm{\resolop_{\alpha}\mu - \resolop_{\alpha}\nu}
     = 
     \inftynorm{\resolop_{\alpha}\mu - \resolop_{\alpha}\nu}{\mu + \nu}.
  \end{equation}
\end{prop}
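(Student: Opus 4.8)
The inequality $\inftynorm{f}{\mu+\nu}\le\supnorm{f}$ holds for every Borel function $f$, so only $\supnorm{\resolop_{\alpha}\mu-\resolop_{\alpha}\nu}\le\inftynorm{\resolop_{\alpha}\mu-\resolop_{\alpha}\nu}{\mu+\nu}$ requires proof; denote the right-hand side by $C$. Since $\mu,\nu\in\smooth_{00}$, Lemma~\ref{lem: resolvent identity for potentials} gives $\supnorm{\resolop_{\alpha}\mu},\supnorm{\resolop_{\alpha}\nu}<\infty$, so $C\le\supnorm{\resolop_{\alpha}\mu}+\supnorm{\resolop_{\alpha}\nu}<\infty$, and by definition of $C$ we have $\resolop_{\alpha}\mu-\resolop_{\alpha}\nu\le C$ $\mu$-a.e.\ and $\resolop_{\alpha}\nu-\resolop_{\alpha}\mu\le C$ $\nu$-a.e. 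Hence it suffices to prove the following ``difference'' version of Lemma~\ref{2. lem: basic maximum principles for potential}: \emph{if $\mu,\nu\in\smooth_{00}$, $c\ge0$, and $\resolop_{\alpha}\mu\le\resolop_{\alpha}\nu+c$ $\mu$-a.e., then $\resolop_{\alpha}\mu(x)\le\resolop_{\alpha}\nu(x)+c$ for every $x\in S$}; applying this to the pair $(\mu,\nu)$ and then to $(\nu,\mu)$, each with $c=C$, yields $|\resolop_{\alpha}\mu-\resolop_{\alpha}\nu|\le C$ pointwise on $S$, as desired.

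To prove this maximum principle I would argue probabilistically. Put $F\coloneqq\{x\in S\mid\resolop_{\alpha}\mu(x)>\resolop_{\alpha}\nu(x)+c\}$; this is a Borel set with $\mu(F)=0$, and the goal is to show $F=\emptyset$. Let $A$ be the PCAF in the strict sense associated with $\mu$. By Lemma~\ref{2. lem: restriction of smooth measures}, $t\mapsto\int_{0}^{t}1_{F}(X_{s})\,dA_{s}$ is the PCAF in the strict sense associated with $\mu^{F}=0$, hence it vanishes identically $P_{x}$-a.s.\ for every $x\in S$; in other words $A$ increases only while $X\in S\setminus F$. Consequently, for $x\in F$ one has $A_{t}=0$ for all $t\le\tau_{F}$, and combining the additivity of $A$, the strong Markov property at $\tau_{F}$, and the Revuz relation~\ref{2. item: Revuz correspondence} with $f\equiv1$ (so that $\resolop_{\alpha}\mu(y)=E_{y}[\int_{0}^{\infty}e^{-\alpha t}\,dA_{t}]$ for all $y\in S$) gives
\begin{equation}
  \resolop_{\alpha}\mu(x)
  =E_{x}\left[\int_{\tau_{F}}^{\infty}e^{-\alpha t}\,dA_{t}\right]
  =E_{x}\left[e^{-\alpha\tau_{F}}\,\resolop_{\alpha}\mu(X_{\tau_{F}})\right],
  \qquad x\in F .
\end{equation}
Now $\resolop_{\alpha}\mu$ and $\resolop_{\alpha}\nu$ are $\alpha$-excessive (Lemma~\ref{2. lem: basics about excessiveness}\ref{2. lem item: potential is excessive}), hence finely continuous, so $F$ is finely open and $X_{\tau_{F}}\in(S\setminus F)\cup\{\Delta\}$ $P_{x}$-a.s.; on that set $\resolop_{\alpha}\mu\le\resolop_{\alpha}\nu+c$ (recall $\resolop_{\alpha}\mu(\Delta)=\resolop_{\alpha}\nu(\Delta)=0$). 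Moreover $\alpha$-excessiveness together with \eqref{2. eq: C-K equation} makes $(e^{-\alpha t}\resolop_{\alpha}\nu(X_{t}))_{t\ge0}$ a non-negative supermartingale, so optional sampling (using $\supnorm{\resolop_{\alpha}\nu}<\infty$) gives $E_{x}[e^{-\alpha\tau_{F}}\resolop_{\alpha}\nu(X_{\tau_{F}})]\le\resolop_{\alpha}\nu(x)$. Hence, for $x\in F$,
\begin{equation}
  \resolop_{\alpha}\mu(x)
  \le E_{x}\left[e^{-\alpha\tau_{F}}\bigl(\resolop_{\alpha}\nu(X_{\tau_{F}})+c\bigr)\right]
  \le\resolop_{\alpha}\nu(x)+c ,
\end{equation}
which contradicts the definition of $F$ unless $F=\emptyset$.

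The step needing the most care is exactly this upgrade of an $m$-a.e.\ (here even $(\mu+\nu)$-a.e.) inequality between potentials to a genuinely pointwise one; in the argument above it rests on fine continuity of $\alpha$-excessive functions and on the standard fact that a Hunt process started in a finely open set exits it into the complement (or at $\Delta$). If one prefers to avoid the fine topology and stay within the Dirichlet-form toolkit already used for Lemma~\ref{2. lem: basic maximum principles for potential}, the same maximum principle can instead be obtained by an energy estimate: the function $g\coloneqq\resolop_{\alpha}\mu\wedge(\resolop_{\alpha}\nu+c)$ is non-negative, square-integrable (it is dominated by $\resolop_{\alpha}\mu\in L^{2}(S,m)$), quasi-continuous, and $\alpha$-quasi-excessive by Lemma~\ref{2. lem: basics about excessiveness}\ref{2. lem item: constant is excessive}--\ref{2. lem item: potential is excessive}, so Lemma~\ref{2. lem: potentials and excessive} yields $g=\resolop_{\alpha}\lambda$ $m$-a.e.\ for some $\lambda\in\smooth_{0}$; the hypothesis forces $\resolop_{\alpha}\lambda=\resolop_{\alpha}\mu$ $\mu$-a.e.\ and $\resolop_{\alpha}\lambda\le\resolop_{\alpha}\mu$ on $S$, and expanding $\form_{\alpha}(\resolop_{\alpha}\mu-\resolop_{\alpha}\lambda,\resolop_{\alpha}\mu-\resolop_{\alpha}\lambda)$, where $\form_{\alpha}(u,v)\coloneqq\form(u,v)+\alpha\int uv\,dm$, by means of the variational identities $\form_{\alpha}(\resolop_{\alpha}\sigma,v)=\int\tilde v\,d\sigma$ ($\sigma\in\{\mu,\lambda\}$) and the symmetry of $\resol_{\alpha}$ shows this energy is $\le0$; hence $\resolop_{\alpha}\mu=\resolop_{\alpha}\lambda=g$ $m$-a.e., and therefore everywhere by Lemma~\ref{2. lem: a.e. to pointwise}, so $\resolop_{\alpha}\mu\le\resolop_{\alpha}\nu+c$ on $S$.
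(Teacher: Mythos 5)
Your proposal is correct, and your primary (probabilistic) argument is a genuinely different route from the paper's. The paper does not pass through a ``difference maximum principle'' at all: it forms the $\alpha$-quasi-excessive majorant $f=(\varepsilon\wedge \resolop_{\alpha}\mu)+\resolop_{\alpha}\nu$, represents $f$ as a potential $\resolop_{\alpha}\nu'$ via Lemma \ref{2. lem: potentials and excessive}, invokes the comparison principle of Lemma \ref{2. lem: basic maximum principles for potential}\ref{2. lem item: comparison principle for potential} (a citation of \cite[Lemma 2.2.4]{Fukushima_Oshima_Takeda_11_Dirichlet}) to get $\resolop_{\alpha}\mu\le\resolop_{\alpha}\nu'$ everywhere, and then upgrades the resulting q.e.\ bound to a pointwise one exactly as you anticipated, via \ref{2. item: absolute continuity} and $\alpha$-excessiveness (the content of Lemma \ref{2. lem: a.e. to pointwise}). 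Your reduction to ``$\resolop_{\alpha}\mu\le\resolop_{\alpha}\nu+c$ $\mu$-a.e.\ implies the same everywhere'' is sound, and your exit-time proof of it is correct, but note what it imports: fine continuity (equivalently, a.s.\ right-continuity of $t\mapsto u(X_t)$) for $\alpha$-excessive $u$, the fact that a Hunt process leaves a finely open set only through its complement or at $\Delta$, and optional sampling for the discounted excessive supermartingale --- standard Hunt-process facts, but none of them are in the paper's stated toolkit, whereas the paper stays entirely inside the Dirichlet-form lemmas it has already collected. What your route buys is a self-contained, quantitative maximum principle that does not need the detour through a third measure $\nu'$ and explains probabilistically why $\mu$-a.e.\ control suffices (the PCAF never charges the $\mu$-null set $F$). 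Your fallback energy argument is closer in spirit to the paper's proof: it essentially reproves the FOT comparison lemma by expanding $\form_{\alpha}(\resolop_{\alpha}\mu-\resolop_{\alpha}\lambda,\cdot)$, with $g=\resolop_{\alpha}\mu\wedge(\resolop_{\alpha}\nu+c)$ playing the role of the paper's $f$; it is also correct, provided you apply Lemma \ref{2. lem: a.e. to pointwise} to the pair $\resolop_{\alpha}\mu$, $\resolop_{\alpha}\lambda$ (both genuinely $\alpha$-excessive) rather than to $g$ itself, for which excessiveness in the strict pointwise sense would need a separate word.
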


\begin{proof}
  Fix $\mu, \nu \in \smooth_{00}$ and set $\varepsilon \coloneqq \inftynorm{\resolop_{\alpha}\mu - \resolop_{\alpha}\nu}{\mu + \nu}$.
  We then have that 
  \begin{equation}
    \resolop_{\alpha}\mu 
    \leq
    \varepsilon
    + 
    \resolop_{\alpha}\nu,
    \quad 
    \mu\text{-a.e.}  
  \end{equation}
  Since the right-hand side is $\alpha$-excessive,
  Proposition~\ref{prop: domination principle} implies that the above inequality holds everywhere.
  It follows that 
  \begin{equation} \label{eq: one-side estimate on sup norm}
    \resolop_\alpha \mu(x) - \resolop_\alpha \nu(x) \leq \varepsilon,
    \qquad 
    \forall x \in S.
  \end{equation}
  By the definition of $\varepsilon$,
  we also have that 
  $
    \resolop_{\alpha} \nu      
    \leq 
    \varepsilon
    + 
    \resolop_{\alpha}\mu
  $, $\nu$-a.e.
  So, by the same argument, we deduce that 
  \begin{equation} \label{eq: other-side estimate on sup norm}
    \resolop_{\alpha}\nu(x) - \resolop_{\alpha}\mu(x) \leq \varepsilon,
    \quad 
    \forall x \in S.
  \end{equation}
  From \eqref{eq: one-side estimate on sup norm} and \eqref{eq: other-side estimate on sup norm},
  it follows that $\supnorm{\resolop_{\alpha}\mu - \resolop_{\alpha}\nu} \leq \varepsilon$.
  The converse inequality is obvious and so we obtain the desired result.
\end{proof}

We next provide a useful sufficient condition for a Radon measure to be smooth, in Proposition~\ref{2. prop: smoothness condition} below.
To this end, we first present a simple consequence of the strong Markov property.

\begin{prop} \label{prop: balayage}
  Let $D$ be an open subset of $S$.
  It then holds that, for each $\alpha \geq 0$ and $x \in S$, 
  \begin{equation}
    \resol_\alpha(x,y) = E_x\!\left[ e^{-\alpha \sigma_D} \resol_\alpha(X_{\sigma_D}, y)\, 1_{\{\sigma_D < \infty\}} \right],
    \quad \forall y \in D.
  \end{equation}
  In particular, for each $x \in S$ and $y \in D$, 
  it holds that, for Lebesgue-almost every $t > 0$,
  \begin{equation}
    p_t(x,y) = E_x\!\left[ p_{t - \sigma_D}(X_{\sigma_D}, y)\, 1_{\{\sigma_D < t\}} \right].
  \end{equation}
\end{prop}

\begin{proof}
  Fix $x \in S$.
  For any Borel subset $B \subseteq D$, we deduce from the strong Markov property that 
  \begin{align}
    \int_B \resol_\alpha(x,y)\, m(dy) 
    &= 
    E_x\!\left[ \int_0^\infty e^{-\alpha t} 1_{B}(X_t)\, dt \right]\\
    &= 
    E_x\!\left[ \int_{\sigma_D}^\infty e^{-\alpha t} 1_{B}(X_t)\, dt \cdot 1_{\{\sigma_D < \infty\}} \right]\\
    &= 
    E_x\!\left[ e^{-\alpha \sigma_D} E_{X_{\sigma_D}}\!\left[\int_0^\infty e^{-\alpha t} 1_B(X_t)\, dt \right] \cdot 1_{\{\sigma_D < \infty\}} \right]\\
    &= 
    E_x\!\left[ e^{-\alpha \sigma_D} ( \resolop_\alpha 1_B ) (X_{\sigma_D}) \cdot 1_{\{\sigma_D < \infty\}} \right]\\
    &=
    \int_B E_x\!\left[ e^{-\alpha \sigma_D} \resol_\alpha (X_{\sigma_D}, y) \cdot 1_{\{\sigma_D < \infty\}} \right] m(dy).
  \end{align}
  Thus, 
  \begin{equation}
    \resol_\alpha(x,y) = E_x\!\left[ e^{-\alpha \sigma_D} \resol_\alpha (X_{\sigma_D}, y) \cdot 1_{\{\sigma_D < \infty\}} \right],
    \quad m\text{-a.e.\ on } D.
  \end{equation}
  Since the functions on both sides are $\alpha$-coexcessive with respect to $y$
  (i.e., $\alpha$-excessive for the dual process $\hat{X}$), 
  the above equality holds for every $y \in D$
  (cf.\ \cite[Proof of Theorem~1.16 in Chapter~VI]{Blumenthal_Getoor_68_Markov}).
  Observe that 
  \begin{align}
    E_x\!\left[ e^{-\alpha \sigma_D} \resol_\alpha (X_{\sigma_D}, y) \cdot 1_{\{\sigma_D < \infty\}} \right]
    &=
    E_x\!\left[ e^{-\alpha \sigma_D} \int_0^\infty e^{-\alpha t} p_t(X_{\sigma_D}, y)\, dt \cdot 1_{\{\sigma_D < \infty\}} \right]\\
    &=
    E_x\!\left[ \int_{\sigma_D}^\infty e^{-\alpha t} p_{t - \sigma_D} (X_{\sigma_D}, y)\, dt \cdot 1_{\{\sigma_D < \infty\}} \right]\\
    &= 
    \int_0^\infty e^{-\alpha t} E_x\!\left[ p_{t - \sigma_D} (X_{\sigma_D}, y)\, 1_{\{\sigma_D < t\}} \right] dt.
  \end{align}
  Thus, the last assertion follows from the uniqueness of the Laplace transform.
\end{proof}

\begin{prop} \label{2. prop: smoothness condition}
  Let $\mu$ be a Radon measure on $S$.
  If, for any compact subset $K$ of $S$, 
  \begin{equation} \label{2. prop item eq: 2. smoothness condition}
    \lim_{\delta \to 0}
    \sup_{x \in K}
    \int_{0}^{\delta} \int_{K} p_{t}(x, y)\, dt\, \mu(dy)
    < \infty,
  \end{equation}
  Then $\|\resolop_1 \mu^D\|_\infty < \infty$ for any relatively compact Borel subset $D$ of $S$. 
  In particular, 
  if \eqref{2. prop item eq: 2. smoothness condition} holds for each compact subset $K$ 
  and $\mu$ charges no semipolar sets,
  then $\mu \in \smooth_{1}$.
\end{prop}

\begin{proof}
  Fix a relatively compact open subset $D$ of $S$.
  By Proposition~\ref{prop: balayage},
  \begin{align}
    \int_0^\delta \int_D p_t(x,y)\, \mu(dy)\, dt 
    &= 
    E_x\!\left[ \int_0^\delta \int_D p_{t - \sigma_D}(X_{\sigma_D}, y) \cdot 1_{\{\sigma_D < t\}}\, \mu(dy)\, dt \right]\\
    &=
    E_x\!\left[ \int_0^{\delta - \sigma_D} \int_D p_t(X_{\sigma_D}, y)\, \mu(dy)\, dt \right]\\
    &\leq 
    \sup_{z \in \overline{D}}
    \int_0^\delta \int_D p_t(z, y)\, \mu(dy)\, dt.
  \end{align}
  Thus,
  \begin{equation} \label{pr: Maximal principle for potential}
    \sup_{x \in S} \int_0^\delta \int_D p_t(x,y)\, \mu(dy)\, dt  
    \leq    
    \sup_{z \in \overline{D}}
    \int_0^\delta \int_D p_t(z, y)\, \mu(dy)\, dt.
  \end{equation}
  By \eqref{2. prop item eq: 2. smoothness condition},
  we can find a small $\delta$ such that the right-hand side in the above inequality is finite.
  Hence, if we fix such a $\delta > 0$, then 
  \begin{equation}
    M \coloneqq \sup_{x \in S} \int_0^\delta \int_D p_t(x,y)\, \mu(dy)\, dt < \infty.
  \end{equation}
  We are now able to follow the argument in the proof of \cite[Proposition 2.7]{Mori_21_Kato_Sobolev}.
  Namely, by using the Chapman--Kolmogorov equation~\eqref{2. eq: C-K equation},
  we deduce that 
  \begin{align}
    \resolop_1\mu^D(x) 
    &= 
    \int_D \int_0^\infty e^{-t} p_t(x,y)\, dt\, \mu(dy)\\
    &=
    \sum_{k=0}^\infty \int_D \int_{\delta k}^{\delta(k+1)} e^{-t} p_t(x,y)\, dt\, \mu(dy)\\
    &=
    \sum_{k=0}^\infty e^{-\delta k} \int_D \int_0^\delta e^{-s} p_{s + \delta k}(x,y)\, ds\, \mu(dy)\\
    &= 
    \sum_{k=0}^\infty e^{-\delta k} \int_D \int_0^\delta e^{-s} \int_S p_{\delta k}(x, z) p_s(z,y)\, m(dz)\, ds\, \mu(dy)\\
    &\leq
    M \sum_{k=0}^\infty e^{-\delta k} \int_S p_{\delta k}(x, z)\, m(dz)\\
    &\leq    
    \frac{M}{1-e^{-\delta}},
    \label{pr: potential bound by CK equation}
  \end{align}
  which implies that $\|\resolop_{1}\mu^{D}\|_{\infty} < \infty$.
  By the local compactness of $S$, 
  any relatively compact Borel subset $D'$ is contained in some relatively compact open subset $D''$.
  Since $\resolop_{1}\mu^{D'} \leq \resolop_{1}\mu^{D''}$,
  we obtain that $\supnorm{\resolop_{1}\mu^{D'}} < \infty$,
  which proves the first assertion.
  Now, assume that $\mu$ charges no semipolar set.
  Let $(D_{k})_{k \geq 1}$ be an increasing sequence of relatively compact open subsets of $S$ with $\bigcup_{k \geq 1} D_{k} = S$.
  Since $\mu^{D_{k}} \in \smooth_{00}$ and $\tau_{D_k} \to \zeta$ almost surely under $P_{x}$ for every $x \in S$,
  we deduce that $\mu \in \smooth_{1}$.
\end{proof}


\section{Proof of main results} \label{sec: proof of main results}

In this section,
we prove the main results.
We first show Theorem~\ref{3. thm: key inequality}.
Then, using it, we establish the convergence results, 
Theorems \ref{1. thm: the main strong result} and \ref{1. thm: the main general result}.
We proceed in the same setting as in Section~\ref{sec: dual processes}.


\subsection{Proof of Theorem~\ref{3. thm: key inequality}} \label{sec: The key estimate on difference of PCAF}

In this subsection, we prove Theorem~\ref{3. thm: key inequality}.
We will use the following moment formula for the product of PCAFs.

\begin{lem} [{\cite[Corollary~2.10(iii)]{Kajino_Noda_pre_Generalized}}] \label{2. lem: Kac's formula}
  Fix $\mu, \nu \in \smooth_{1}$ and write $A, B$ for the associated PCAFs.
  Then, for any $x \in S$,
  \begin{equation}
    E_{x}[A_{\infty} B_{\infty}]
    =
    \int_{S} \resol_{0}(x,y) \resolop_{0}\mu(y)\, \nu(dy)
    +
    \int_{S} \resol_{0}(x,y) \resolop_{0}\nu(y)\, \mu(dy).
  \end{equation}
\end{lem}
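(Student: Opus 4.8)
The plan is to express the product $A_{\infty}B_{\infty}$ as a sum of two single integrals against the additive functionals, in which each integrand is a functional of the \emph{future} of the path, and then to evaluate each term by the Markov property together with the Revuz correspondence \ref{2. item: Revuz correspondence}. As preparation I would first record the $\alpha = 0$ form of \ref{2. item: Revuz correspondence}: letting $\alpha \downarrow 0$ and using $\resol_{\alpha} \uparrow \resol_{0}$ and $e^{-\alpha t} \uparrow 1$, monotone convergence gives, for every non-negative Borel $f$ and every $x \in S$,
\begin{equation}
  E_{x}\Bigl[\int_{0}^{\infty} f(X_{t})\, dA_{t}\Bigr]
  = \int_{S} \resol_{0}(x, y) f(y)\, \mu(dy),
\end{equation}
and the analogous identity with $(B, \nu)$ in place of $(A, \mu)$; taking $f \equiv 1$ yields $E_{y}[A_{\infty}] = \resolop_{0}\mu(y)$ and $E_{y}[B_{\infty}] = \resolop_{0}\nu(y)$ for all $y \in S$, read as identities in $[0, \infty]$. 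I would also note that the strict-sense additivity $A_{t+s} = A_{t} + A_{s} \circ \theta_{t}$, upon letting $s \to \infty$, gives $A_{\infty} \circ \theta_{t} = A_{\infty} - A_{t}$ on the common defining set (and likewise for $B$).

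Next I would split the product. Writing $A_{\infty} B_{\infty} = \int_{[0,\infty)^{2}} dA_{s}\, dB_{t}$ and decomposing the region of integration according to $\{s > t\}$ and $\{s < t\}$ (the diagonal $\{s = t\}$ carrying no mass because both $A$ and $B$ are continuous), Tonelli's theorem together with $A_{\infty} \circ \theta_{t} = A_{\infty} - A_{t}$ gives the identity
\begin{equation}
  A_{\infty} B_{\infty}
  = \int_{0}^{\infty} (A_{\infty} \circ \theta_{t})\, dB_{t}
  + \int_{0}^{\infty} (B_{\infty} \circ \theta_{t})\, dA_{t}.
\end{equation}
By symmetry it then suffices to evaluate the first integral. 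Taking expectations and applying the Markov property in the form
\begin{equation}
  E_{x}\Bigl[\int_{0}^{\infty} (A_{\infty} \circ \theta_{t})\, dB_{t}\Bigr]
  = E_{x}\Bigl[\int_{0}^{\infty} E_{X_{t}}[A_{\infty}]\, dB_{t}\Bigr]
  = E_{x}\Bigl[\int_{0}^{\infty} \resolop_{0}\mu(X_{t})\, dB_{t}\Bigr],
\end{equation}
and then invoking the $\alpha = 0$ Revuz identity for $(B, \nu)$ with $f = \resolop_{0}\mu$, I would obtain $E_{x}\bigl[\int_{0}^{\infty} (A_{\infty} \circ \theta_{t})\, dB_{t}\bigr] = \int_{S} \resol_{0}(x, y) \resolop_{0}\mu(y)\, \nu(dy)$. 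The symmetric computation gives $\int_{S} \resol_{0}(x, y) \resolop_{0}\nu(y)\, \mu(dy)$ for the second term, and summing the two yields the claimed formula.

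The main obstacle is the Markov-property step, since $dB_{t}$ is a random measure and one cannot condition on $\filt_{t}$ pointwise inside the integral. I would justify it by a right-endpoint discretisation: along a partition $0 = t_{0} < t_{1} < \cdots$ with mesh tending to $0$, the increment $B_{t_{k+1}} - B_{t_{k}}$ is $\filt_{t_{k+1}}$-measurable while $A_{\infty} \circ \theta_{t_{k+1}}$ has $\filt_{t_{k+1}}$-conditional expectation $E_{X_{t_{k+1}}}[A_{\infty}] = \resolop_{0}\mu(X_{t_{k+1}})$ by the ordinary Markov property at the deterministic time $t_{k+1}$; hence each summand $E_{x}\bigl[(A_{\infty} \circ \theta_{t_{k+1}})(B_{t_{k+1}} - B_{t_{k}})\bigr]$ equals $E_{x}\bigl[\resolop_{0}\mu(X_{t_{k+1}})(B_{t_{k+1}} - B_{t_{k}})\bigr]$. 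Since $t \mapsto A_{\infty} \circ \theta_{t} = A_{\infty} - A_{t}$ is continuous and $t \mapsto \resolop_{0}\mu(X_{t})$ is right-continuous (as $\resolop_{0}\mu$ is excessive, being an increasing limit as $\alpha \downarrow 0$ of the $\alpha$-excessive functions $\resolop_{\alpha}\mu$ of Lemma \ref{2. lem: basics about excessiveness}, hence finely continuous), the right-endpoint Riemann--Stieltjes sums converge pathwise against the continuous measure $dB$ to the respective integrals. The remaining point is the interchange of this limit with $E_{x}$; I would handle it by first reducing to the bounded functional $A_{\infty} \wedge N$ via monotone convergence (noting $(A_{\infty} \wedge N) \circ \theta_{t} = (A_{\infty} \circ \theta_{t}) \wedge N$ and $E_{y}[A_{\infty} \wedge N] \uparrow \resolop_{0}\mu(y)$) and localising in the time horizon, all interchanges being legitimate because every integrand is non-negative.
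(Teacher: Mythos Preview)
The paper does not give its own proof of this lemma; it is quoted verbatim as \cite[Corollary 2.9(iii)]{Kajino_Noda_pre_Generalized} and used as a black box. So there is nothing to compare your argument against in the paper itself.

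Your derivation is the standard route to Kac's second-moment formula and is essentially sound: the product decomposition via $A_{\infty}B_{\infty} = \iint dA_{s}\,dB_{t}$, the vanishing of the diagonal by continuity of $A$, the identification $A_{\infty} - A_{t} = A_{\infty}\circ\theta_{t}$, and the $\alpha \downarrow 0$ limit of \ref{2. item: Revuz correspondence} by monotone convergence are all correct. The one place that deserves more care is the Markov-property step $E_{x}\bigl[\int_{0}^{\infty}(A_{\infty}\circ\theta_{t})\,dB_{t}\bigr] = E_{x}\bigl[\int_{0}^{\infty}E_{X_{t}}[A_{\infty}]\,dB_{t}\bigr]$. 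Your discretisation works, but the cleanest way to make it rigorous---and the device the paper itself invokes elsewhere (see the appeal to \cite[Exercise 4.1.7]{Chen_Fukushima_12_Symmetric} in the proof of Theorem \ref{3. thm: key inequality})---is to prove the ``switching identity'' $E_{x}\bigl[\int_{0}^{\infty}(Z\circ\theta_{t})\,dB_{t}\bigr] = E_{x}\bigl[\int_{0}^{\infty}E_{X_{t}}[Z]\,dB_{t}\bigr]$ first for $Z = \prod_{i} f_{i}(X_{s_{i}})$ with bounded non-negative $f_{i}$ (where the discretisation is unnecessary: one simply conditions on $\filt_{t \vee \max s_{i}}$ after Fubini) and then extend to general non-negative $Z$ by a monotone-class argument, finally taking $Z = A_{\infty}\wedge N$ and letting $N\uparrow\infty$. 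This avoids having to control the pathwise Riemann--Stieltjes approximation of a possibly unbounded, merely right-continuous integrand against $dB$.
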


We first show an estimate for PCAFs associated with measures in $\smooth_{00}^{(0)}$,
where we recall the class $\smooth_{00}^{(0)}$ from Definition~\ref{dfn: smooth measure}.

\begin{lem} \label{lem: L^2 inequality for 0-order measure}
  Fix $\mu, \nu \in \smooth_{00}^{(0)}$.
  Let $A$ and $B$ be the associated PCAFs, respectively.
  Then it holds that
  \begin{equation}
    \sup_{x \in S}
    E_{x}
    \left[ \sup_{0 \leq t \leq \infty} |A_{t} - B_{t}|^{2} \right]
    \leq
    18 (\|\resolop_{0}\mu\|_{\infty} + \|\resolop_{0}\nu\|_{\infty}) \|\resolop_{0}\mu - \resolop_{0}\nu\|_{\infty} .
  \end{equation}
\end{lem}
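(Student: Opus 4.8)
The plan is to realise the difference $A_{t}-B_{t}$ as a closed $L^{2}$-martingale plus a uniformly bounded term, and then to combine Doob's $L^{2}$-maximal inequality with the second-moment formula of Lemma~\ref{2. lem: Kac's formula}.

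First I would collect the facts needed. Since $\resol_{0}\geq\resol_{1}$ pointwise, we have $\resolop_{0}\mu\geq\resolop_{1}\mu$, so $\smooth_{00}^{(0)}\subseteq\smooth_{00}\subseteq\smooth_{1}$; in particular $A$ and $B$ are well-defined PCAFs in the strict sense and Lemma~\ref{2. lem: Kac's formula} applies. Taking $f\equiv 1$ in \ref{2. item: Revuz correspondence} and letting $\alpha\downarrow 0$ (monotone convergence, $\resol_{\alpha}\uparrow\resol_{0}$) gives $E_{y}[A_{\infty}]=\resolop_{0}\mu(y)$ for every $y\in S$, and likewise $E_{y}[B_{\infty}]=\resolop_{0}\nu(y)$. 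Moreover, Lemma~\ref{2. lem: Kac's formula} with $\nu$ replaced by $\mu$ gives $E_{x}[A_{\infty}^{2}]=2\int_{S}\resol_{0}(x,y)\resolop_{0}\mu(y)\,\mu(dy)\leq 2\supnorm{\resolop_{0}\mu}\,\resolop_{0}\mu(x)<\infty$, and similarly for $B$, so $A_{\infty},B_{\infty}\in L^{2}(P_{x})$ for every $x\in S$.

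Next, fix $x\in S$ and let $M_{t}\coloneqq E_{x}[A_{\infty}-B_{\infty}\mid\filt_{t}]$ for $t\in[0,\infty]$; this is a uniformly integrable martingale closed by $M_{\infty}=A_{\infty}-B_{\infty}$, and it admits a right-continuous modification since $\filt_{*}$ satisfies the usual conditions. Using $A_{\infty}=A_{t}+A_{\infty}\circ\theta_{t}$ on a defining set, the Markov property, and $E_{y}[A_{\infty}]=\resolop_{0}\mu(y)$, one gets $E_{x}[A_{\infty}\mid\filt_{t}]=A_{t}+\resolop_{0}\mu(X_{t})$ $P_{x}$-a.s.\ for each $t$, and analogously for $B$; hence, for each fixed $t$,
\begin{equation}
  M_{t}=(A_{t}-B_{t})+\bigl(\resolop_{0}\mu(X_{t})-\resolop_{0}\nu(X_{t})\bigr),\qquad P_{x}\text{-a.s.}
\end{equation}
Applying this identity simultaneously along a fixed countable dense set of times, using that $t\mapsto A_{t}-B_{t}$ is continuous on $[0,\infty)$ and constant on $[\zeta,\infty]$, and bounding $|\resolop_{0}\mu-\resolop_{0}\nu|$ by $\supnorm{\resolop_{0}\mu-\resolop_{0}\nu}$, I obtain $\sup_{0\leq t\leq\infty}|A_{t}-B_{t}|\leq\sup_{0\leq t\leq\infty}|M_{t}|+\supnorm{\resolop_{0}\mu-\resolop_{0}\nu}$, $P_{x}$-a.s. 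Combining this with $(a+b)^{2}\leq 2a^{2}+2b^{2}$, Doob's $L^{2}$-maximal inequality $E_{x}[\sup_{t}|M_{t}|^{2}]\leq 4E_{x}[M_{\infty}^{2}]$, and $M_{\infty}=A_{\infty}-B_{\infty}$, we get
\begin{equation}
  E_{x}\Bigl[\sup_{0\leq t\leq\infty}|A_{t}-B_{t}|^{2}\Bigr]\leq 8\,E_{x}\bigl[(A_{\infty}-B_{\infty})^{2}\bigr]+2\supnorm{\resolop_{0}\mu-\resolop_{0}\nu}^{2}.
\end{equation}

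It remains to estimate $E_{x}[(A_{\infty}-B_{\infty})^{2}]$. Expanding the square and inserting Lemma~\ref{2. lem: Kac's formula} three times (for $E_{x}[A_{\infty}^{2}]$, $E_{x}[A_{\infty}B_{\infty}]$ and $E_{x}[B_{\infty}^{2}]$), a short rearrangement gives $E_{x}[(A_{\infty}-B_{\infty})^{2}]=2\int_{S}\resol_{0}(x,y)\bigl(\resolop_{0}\mu(y)-\resolop_{0}\nu(y)\bigr)\,(\mu-\nu)(dy)$; bounding the factor $|\resolop_{0}\mu-\resolop_{0}\nu|$ by $\supnorm{\resolop_{0}\mu-\resolop_{0}\nu}$ and integrating against $\mu+\nu$ yields $E_{x}[(A_{\infty}-B_{\infty})^{2}]\leq 2\supnorm{\resolop_{0}\mu-\resolop_{0}\nu}\,(\resolop_{0}\mu(x)+\resolop_{0}\nu(x))\leq 2\supnorm{\resolop_{0}\mu-\resolop_{0}\nu}\,(\supnorm{\resolop_{0}\mu}+\supnorm{\resolop_{0}\nu})$. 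Substituting this into the previous display, writing $\kappa\coloneqq\supnorm{\resolop_{0}\mu}+\supnorm{\resolop_{0}\nu}$ and $\delta\coloneqq\supnorm{\resolop_{0}\mu-\resolop_{0}\nu}$, and noting $\delta\leq\kappa$, I obtain the bound $16\kappa\delta+2\delta^{2}\leq 18\kappa\delta$; taking the supremum over $x\in S$ finishes the proof. The only genuinely delicate point is the passage from the identity for $M_{t}$ holding at each fixed $t$ to a pathwise estimate valid simultaneously for all $t$, which is exactly why a right-continuous version of the martingale and the continuity of $A$ and $B$ are invoked; everything else is routine computation.
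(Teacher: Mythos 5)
Your proof is correct and follows essentially the same route as the paper: the same closed martingale $M_{t}=E_{x}[A_{\infty}-B_{\infty}\mid\filt_{t}]=(A_{t}-B_{t})+(\resolop_{0}\mu(X_{t})-\resolop_{0}\nu(X_{t}))$, Doob's $L^{2}$-maximal inequality, the second-moment formula of Lemma \ref{2. lem: Kac's formula}, and the final bound $16\kappa\delta+2\delta^{2}\leq 18\kappa\delta$. The only difference is that you spell out the regularisation of the martingale and the passage from fixed times to a pathwise supremum, which the paper leaves implicit.
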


\begin{proof}
  Fix $x \in S$.
  We note that, by Lemma~\ref{2. lem: Kac's formula}, $A_{\infty}$ and $B_{\infty}$ are square-integrable with respect to $P_{x}$.
  Define a $\filt_{*}$-martingale $(M_{t})_{t \in [0, \infty]}$ by setting $M_{t} \coloneqq E_{x}[A_{\infty} - B_{\infty}\, |\, \filt_{t}]$.
  Doob's martingale inequality yields that 
  \begin{equation}
    E_x\!\left[ \sup_{0 \leq t \leq \infty} M_{t}^{2} \right]
    \leq 
    4E_{x}[M_{\infty}^{2}]
    = 
    4E_x\!\left[ |A_{\infty} - B_{\infty}|^{2} \right].
  \end{equation}
  Using Lemma~\ref{2. lem: Kac's formula},
  we obtain that 
  \begin{align}
    E_x\!\left[ |A_{\infty} - B_{\infty}|^{2} \right] 
    &= 
    E_{x}[A_{\infty}^{2}] + E_{x}[B_{\infty}^{2}] - 2E_{x}[A_{\infty}B_{\infty}] \\
    &= 
    2 \int_{S} \resol_{0}(x,y) \resolop_{0}\mu(y)\, \mu(dy) 
    + 
    2 \int_{S} \resol_{0}(x,y) \resolop_{0}\nu(y)\, \nu(dy) \\
    &\qquad
    - 
    2\int_{S} \resol_{0}(x,y) \resolop_{0}\mu(y)\, \nu(dy)
    -
    2\int_{S} \resol_{0}(x,y) \resolop_{0}\nu(y)\, \mu(dy) \\ 
    &\leq    
    2 (\|\resolop_{0}\mu\|_{\infty} + \|\resolop_{0}\nu\|_{\infty}) \|\resolop_{0}\mu - \resolop_{0}\nu\|_{\infty}.
  \end{align}
  Since we have that $A_{\infty} - B_{\infty} = (A_{\infty} - B_{\infty}) \circ \theta_{t} + (A_{t} - B_{t})$ for every $t \geq 0$,
  the Markov property yields that 
  \begin{equation}
    M_{t}
    = 
    A_{t} - B_{t}
    + 
    E_{X_{t}}[A_{\infty} - B_{\infty}]
    = 
    A_{t} - B_{t}
    + 
    \resolop_{0}\mu(X_{t}) - \resolop_{0}\nu(X_{t}),
    \quad
    \forall t \geq 0.
  \end{equation}
  Thus, we deduce that 
  \begin{align}
    \lefteqn{E_x\!\left[ \sup_{0 \leq t \leq \infty} |A_{t} - B_{t}|^{2} \right]}\\
    & \leq
    2 E_x\!\left[ \sup_{0 \leq t \leq \infty} M_{t}^{2} \right]
    + 
    2 E_x\!\left[ \sup_{0 \leq t \leq \infty} |\resolop_{0}\mu(X_{t}) - \resolop_{0}\nu(X_{t})|^{2} \right] \\
    & \leq    
    8E_x\!\left[ |A_{\infty} - B_{\infty}|^{2} \right]
    + 
    2 \|\resolop_{0}\mu - \resolop_{0}\nu\|_{\infty}^{2} \\
    & \leq    
    16 (\|\resolop_{0}\mu\|_{\infty} + \|\resolop_{0}\nu\|_{\infty}) \|\resolop_{0}\mu - \resolop_{0}\nu\|_{\infty} 
    + 
    2 (\|\resolop_{0}\mu\|_{\infty} + \|\resolop_{0}\nu\|_{\infty}) 
    \|\resolop_{0}\mu - \resolop_{0}\nu\|_{\infty}^{2} \\ 
    & \leq
    18 (\|\resolop_{0}\mu\|_{\infty} + \|\resolop_{0}\nu\|_{\infty}) \|\resolop_{0}\mu - \resolop_{0}\nu\|_{\infty} 
  \end{align}
  which completes the proof.
\end{proof}

To extend the above result to more general smooth measures,
we use a killing technique.
Let $\lambda$ be the exponential distribution with mean $1$, i.e., $\lambda((v, \infty)) = e^{-v}$ for all $v \geq 0$.
We set $\tilde{\Omega} \coloneqq \Omega \times [0, \infty)$,
$\tilde{\sigalg} \coloneqq \sigalg \otimes \Borel([0, \infty))$,
and $\tilde{P}_{x} \coloneqq P_{x} \otimes \lambda$.
Fix $\alpha > 0$.
We define $T_{\alpha} \colon \Omega \times [0, \infty) \to [0, \infty)$
by setting $T_{\alpha}(\omega, v) = \alpha^{-1}v$,
and define, for each $t \in [0, \infty]$ and $(\omega, v) \in \tilde{\Omega}$,
\begin{equation}
  \tilde{X}^{(\alpha)}_{t}(\omega, v)
  \coloneqq
  \begin{cases}
    X_{t}(\omega),& t < T_{\alpha}(\omega, v),\\
    \Delta, & t \geq T_{\alpha}(\omega, v),
  \end{cases}
  \quad 
  \tilde{\theta}^{\alpha}_{t}(\omega, v)
  \coloneqq 
  (\theta_{t}(\omega), (v- \alpha t) \vee 0).
\end{equation}
Then, 
$\tilde{X}^{(\alpha)} = (\tilde{\Omega},\allowbreak \tilde{\sigalg},\allowbreak (\tilde{X}^{(\alpha)}_{t})_{t \in [0, \infty]},
  \allowbreak (\tilde{P}_{x})_{x \in S_{\Delta}}, \allowbreak (\tilde{\theta}^{\alpha}_{t})_{t \in [0, \infty]})$ 
is a standard process (cf.\ \cite[Theorem~A.3.13]{Chen_Fukushima_12_Symmetric}).
(NB. In \cite[Theorem~A.3.13(ii)]{Chen_Fukushima_12_Symmetric}, a more general transformation is considered
and hence the measurability of $x \mapsto \tilde{P}_{x}(\cdot)$ becomes weak, so-called universal measurability. 
However, in our setting, it is easy to check that the map is still $\Borel(S)$-measurable.)
It is elementary to check that
$\tilde{X}^{(\alpha)}$ satisfies Assumption~\ref{assum: heat kernel}.
In particular,
the transition density of $\tilde{X}^{(\alpha)}$ is given by $\tilde{p}^{(\alpha)}_{t}(x,y) = e^{-\alpha t}p_{t}(x,y)$.
As a consequence,
the $\beta$-potential density of $\tilde{X}^{(\alpha)}$ is $\resol_{\alpha+\beta}$.
This implies that if $\mu$ is a smooth measure of $X$,
then $\mu$ is also smooth with respect to $\tilde{X}^{(\alpha)}$. 
In the following arguments,
any function $Y$ defined on $\Omega$ is naturally identified with the function $\tilde{Y}$ on $\tilde{\Omega}$
given by $\tilde{Y}(\omega, v) = Y(\omega)$.

\begin{lem} \label{lem: killed PCAF}
  Let $\mu$ be a smooth measure of $X$
  and $A$ be the associated PCAF of $X$.
  Fix $\alpha > 0$.
  We define, for each $t \in [0, \infty]$ and $(\omega, v) \in \tilde{\Omega}$,
  \begin{equation}
    \tilde{A}^{(\alpha)}_{t}(\omega, v) 
    \coloneqq 
    \begin{cases}
      A_{t}(\omega), & t < T_{\alpha}(\omega, v),\\
      A_{T_{\alpha}(\omega, v)}(\omega), & t \geq T_{\alpha}(\omega, v).
    \end{cases}
  \end{equation}
  Then $(\tilde{A}^{(\alpha)}_{t})_{t \geq 0}$ is a PCAF of $\tilde{X}^{(\alpha)}$
  and its associated smooth measure with respect to $\tilde{X}^{(\alpha)}$ is $\mu$.
\end{lem}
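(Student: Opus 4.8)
The plan is to check the two defining requirements separately: that $(\tilde{A}^{(\alpha)}_{t})_{t \geq 0}$ is a PCAF of $\tilde{X}^{(\alpha)}$ in the strict sense, and that it satisfies the Revuz relation \ref{2. item: Revuz correspondence} for $\tilde{X}^{(\alpha)}$ with the measure $\mu$. The elementary observation underlying everything is that $T_{\alpha}(\omega, v) = \alpha^{-1}v$ is precisely the lifetime contributed by the killing, so that the lifetime of $\tilde{X}^{(\alpha)}$ is $\zeta^{(\alpha)} \coloneqq \zeta \wedge T_{\alpha}$; moreover, since $A$ is constant after $\zeta$ on its defining set, on $\tilde{\Lambda} \coloneqq \Lambda \times [0, \infty)$ (where $\Lambda$ is a defining set of $A$ with $P_{x}(\Lambda) = 1$ for all $x$) one has
\begin{equation}
  \tilde{A}^{(\alpha)}_{t}(\omega, v) = A_{t \wedge T_{\alpha}(\omega, v)}(\omega) = A_{t \wedge \zeta^{(\alpha)}(\omega, v)}(\omega).
\end{equation}

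For the PCAF axioms, I would take $\tilde{\Lambda}$ as the defining set; it lies in $\tilde{\filt}_{\infty}$ and satisfies $\tilde{P}_{x}(\tilde{\Lambda}) = P_{x}(\Lambda) = 1$ for every $x$, so the exceptional set is empty. On $\tilde{\Lambda}$: $\tilde{A}^{(\alpha)}_{0} = A_{0} = 0$; $t \mapsto \tilde{A}^{(\alpha)}_{t}$ is continuous as the composition of $t \mapsto t \wedge T_{\alpha}$ with the continuous path $A_{\cdot}$; for $t < \zeta^{(\alpha)}$ one has $t \wedge T_{\alpha} = t < \zeta$, hence $\tilde{A}^{(\alpha)}_{t} = A_{t} < \infty$; and for $t \geq \zeta^{(\alpha)}$ the value equals $A_{\zeta^{(\alpha)}}(\omega)$, hence is constant. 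Additivity $\tilde{A}^{(\alpha)}_{t+s} = \tilde{A}^{(\alpha)}_{t} + \tilde{A}^{(\alpha)}_{s} \circ \tilde{\theta}^{\alpha}_{t}$ is verified by a short case analysis on $\{t+s < T_{\alpha}\}$, $\{t < T_{\alpha} \leq t+s\}$ and $\{T_{\alpha} \leq t\}$, using $T_{\alpha}(\tilde{\theta}^{\alpha}_{t}(\omega, v)) = (T_{\alpha}(\omega, v) - t) \vee 0$ and the additivity of $A$. Finally, $\tilde{\filt}_{*}$-adaptedness follows from the representation $\tilde{A}^{(\alpha)}_{t} = A_{t \wedge \zeta^{(\alpha)}}$: the $\filt_{*}$-adapted process $A$ stopped at the lifetime depends only on the path of $\tilde{X}^{(\alpha)}$ up to time $t$ (which determines $\zeta^{(\alpha)} \wedge t$), and the completeness of $\tilde{\filt}_{*}$ absorbs the $\tilde{P}_{x}$-negligible set $\tilde{\Lambda}^{c}$.

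The substantive point is the Revuz identity. Since the transition density of $\tilde{X}^{(\alpha)}$ is $e^{-\alpha t}p_{t}$, its $\beta$-potential density equals $\resol_{\alpha + \beta}$, so it suffices to show, for every $\beta > 0$, non-negative Borel $f$ on $S$, and $x \in S$, that $\tilde{E}_{x}[\int_{0}^{\infty} e^{-\beta t} f(\tilde{X}^{(\alpha)}_{t})\, d\tilde{A}^{(\alpha)}_{t}] = \int_{S} \resol_{\alpha+\beta}(x, y) f(y)\, \mu(dy)$. For fixed $(\omega, v)$, because $\tilde{A}^{(\alpha)}$ is frozen at $T_{\alpha}$ and $f(\tilde{X}^{(\alpha)}_{t}) = 0$ for $t \geq T_{\alpha}$,
\begin{equation}
  \int_{0}^{\infty} e^{-\beta t} f(\tilde{X}^{(\alpha)}_{t})\, d\tilde{A}^{(\alpha)}_{t} = \int_{[0,\, \alpha^{-1}v)} e^{-\beta t} f(X_{t})\, dA_{t}.
\end{equation}
Integrating against $\lambda(dv) = e^{-v}\, dv$ on $[0, \infty)$ and applying Tonelli's theorem,
\begin{align}
  \int_{[0, \infty)} \int_{[0,\, \alpha^{-1}v)} e^{-\beta t} f(X_{t})\, dA_{t}\, \lambda(dv)
  &= \int_{0}^{\infty} e^{-\beta t} f(X_{t}) \left( \int_{\alpha t}^{\infty} e^{-v}\, dv \right) dA_{t} \\
  &= \int_{0}^{\infty} e^{-(\alpha + \beta)t} f(X_{t})\, dA_{t}.
\end{align}
Taking $E_{x}$ of both sides and invoking \ref{2. item: Revuz correspondence} for $A$ with parameter $\alpha + \beta > 0$,
\begin{align}
  \tilde{E}_{x}\left[ \int_{0}^{\infty} e^{-\beta t} f(\tilde{X}^{(\alpha)}_{t})\, d\tilde{A}^{(\alpha)}_{t} \right]
  &= E_{x}\left[ \int_{0}^{\infty} e^{-(\alpha + \beta)t} f(X_{t})\, dA_{t} \right] \\
  &= \int_{S} \resol_{\alpha + \beta}(x, y) f(y)\, \mu(dy),
\end{align}
which is exactly \ref{2. item: Revuz correspondence} for $\tilde{A}^{(\alpha)}$ and $\mu$ relative to $\tilde{X}^{(\alpha)}$. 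Since $\mu$ is smooth in the strict sense with respect to $\tilde{X}^{(\alpha)}$ (as observed before the statement), the uniqueness in the Revuz correspondence identifies $\tilde{A}^{(\alpha)}$, up to equivalence, as the PCAF in the strict sense associated with $\mu$, completing the proof.

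I expect the main obstacle to be the bookkeeping around the two competing lifetimes: one must consistently reconcile $\zeta^{(\alpha)} = \zeta \wedge T_{\alpha}$ with the finiteness-before and constancy-after conditions, and, above all, establish $\tilde{\filt}_{*}$-adaptedness carefully, since on $\{\zeta \leq t\}$ the path of $\tilde{X}^{(\alpha)}$ no longer separates $T_{\alpha}$ from $\zeta$ — this is exactly where the rewriting $\tilde{A}^{(\alpha)}_{t} = A_{t \wedge \zeta^{(\alpha)}}$ does the work. By contrast, the Revuz identity is a transparent Tonelli computation once the independent exponential clock is disintegrated.
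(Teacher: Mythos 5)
Your proposal is correct and follows essentially the same route as the paper: the Revuz identity is verified by disintegrating over the independent exponential clock and applying Tonelli to convert $e^{-\beta t}$ into $e^{-(\alpha+\beta)t}$, then invoking the Revuz correspondence for $A$ at parameter $\alpha+\beta$ and uniqueness. The only difference is that you spell out the PCAF axioms (continuity, additivity, adaptedness via $\tilde{A}^{(\alpha)}_{t}=A_{t\wedge\zeta^{(\alpha)}}$), which the paper dismisses as ``easy to see.''
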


\begin{proof}
  It is easy to see that $(\tilde{A}^{(\alpha)}_{t})_{t \geq 0}$ is a PCAF of $\tilde{X}^{(\alpha)}$.
  We have that, 
  for every non-negative measurable function $f$ on $S$ and $\beta > 0$,
  \begin{align}
    \tilde{E}_{x}
    \left[
      \int_{0}^{\infty} e^{-\beta t} f(\tilde{X}^{(\alpha)}_{t})\, d\tilde{A}^{(\alpha)}_{t}
    \right]
    &=
    \tilde{E}_{x}
    \left[
      \int_{0}^{T_{\alpha}} e^{-\beta t} f(X_{t})\, dA_{t}
    \right] \\
    &=
    \int_{0}^{\infty}
    e^{-\beta t}
    E_{x}
    \left[
      \int_{0}^{s}  f(X_{t})\, dA_{t}
    \right]
    \alpha e^{-\alpha s}\, ds \\
    &=
    E_{x}
    \left[
      \int_{0}^{\infty}
      e^{-\beta t}
      f(X_{t})
      \int_{t}^{\infty} 
      \alpha e^{-\alpha s}\, ds\, dA_{t}
    \right]  \\
    &=
    E_{x}
    \left[
      \int_{0}^{\infty}
       e^{- (\alpha + \beta) t} f(X_{t})\, dA_{t}
    \right] \\
    &=
    \int_{S} \resol_{\alpha+\beta}(x, y) f(y)\, \mu(dy),
  \end{align}  
  where we use \eqref{2. eq: Revuz correspondence} to obtain the last equality.
  Hence, we deduce the desired result.
\end{proof}

Using the above lemma,
we can prove Theorem~\ref{3. thm: key inequality} as follows.

\begin{proof} [{Proof of Theorem~\ref{3. thm: key inequality}}]
  By regarding $\mu$ and $\nu$ as smooth measures of $\tilde{X}^{(\alpha)}$,
  we define $\tilde{A}^{(\alpha)}$ and $\tilde{B}^{(\alpha)}$ to be the associated PCAFs.
  Using Lemma~\ref{lem: killed PCAF}, we deduce that, for any $x \in S$, 
  \begin{align}
    E_x\!\left[\sup_{0 \leq t \leq T} |A_{t} - B_{t}|^{2} \right]
    & \leq  
    \tilde{E}_{x}\left[ \sup_{0 \leq t \leq T} |\tilde{A}^{(\alpha)}_{t} - \tilde{B}_{t}^{(\alpha)}|^{2} \right] 
    +
    \tilde{E}_{x} \left[ \sup_{0 \leq t \leq T} |A_{t} - B_{t}|^{2} ; T_{\alpha} < T\right] \\
    & \leq  
    \tilde{E}_{x}\left[ \sup_{0 \leq t \leq T} |\tilde{A}^{(\alpha)}_{t} - \tilde{B}_{t}^{(\alpha)}|^{2} \right] 
    + 
    2(1-e^{-\alpha T}) (E_{x}[A_{T}^{2}] + E_{x}[B_{T}^{2}]).
  \end{align}
  We have that 
  \begin{align}
    E_{x}[A_{T}^{2}]
    & =  
    E_x\!\left[ \left(\int_{0}^{T} dA_{t}\right)^{2} \right] \\
    & \leq 
    e^{2T} 
    E_x\!\left[ \left(\int_{0}^{\infty} e^{-t} dA_{t}\right)^{2} \right] \\
    &=
    2e^{2T}
    E_{x}
    \left[
      \int_{0}^{\infty} e^{-2t} 
      E_{X_{t}}\left[\int_{0}^{\infty} e^{-s}\, dA_{s}\right]\, 
      dA_{t}
    \right] \\
    &=
    2e^{2T}
    \int_{S} \resol_{2}(x,y) \int_{S} \resol_{1}(y, z)\, \mu(dz)\, \mu(dy)\\
    &\leq  
    2e^{2T}\supnorm{\resolop_{1}\mu}^{2},
  \end{align}
  where we use \cite[Exercise 4.1.7]{Chen_Fukushima_12_Symmetric} and \eqref{2. eq: Revuz correspondence} to obtain
  the second and third equalities, respectively.
  Similarly, we deduce that $E_{x}[B_{T}^{2}] \leq 2e^{2T} \|\resolop_{1}\nu\|_{\infty}^{2}$.
  Since $\resol_{\alpha}$ is the $0$-potential density of $\tilde{X}^{(\alpha)}$,
  it follows from Lemma~\ref{lem: L^2 inequality for 0-order measure} that 
  \begin{equation}
    \sup_{x \in S}
    \tilde{E}_{x}\left[ \sup_{0 \leq t \leq T} |\tilde{A}^{(\alpha)}_{t} - \tilde{B}_{t}^{(\alpha)}|^{2} \right] 
    \leq
    18 (\supnorm{\resolop_{\alpha}\mu} + \|\resolop_{\alpha}\nu\|_{\infty}) \|\resolop_{\alpha} \mu - \resolop_{\alpha}\nu\|_{\infty},
  \end{equation}
  which completes the proof.
\end{proof}


\subsection{Proof of Theorems \ref{1. thm: the main strong result} and \ref{1. thm: the main general result}}

In this subsection, we prove Theorems \ref{1. thm: the main strong result} and \ref{1. thm: the main general result}.
The key observation is that,
by Proposition~\ref{2. prop: maximum principle for difference of potentials},
Assumption~\ref{1. assum: stronger assumption} implies the convergence of the $\alpha$-potentials with respect to the supremum norm.
This is described precisely as follows.

\begin{lem} \label{4. lem: uniform convergence of potentials}
  Under Assumption~\ref{1. assum: stronger assumption},
  for any $\alpha > 0$,
  \begin{gather}
    \lim_{n \to \infty} 
    \supnorm{\resolop_{\alpha} \mu_{n} - \resolop_{\alpha} \mu}
    = 0,
    \qquad
    \sup_{n \geq 1} \supnorm{\resolop_{\alpha}\mu_{n}} 
    < \infty.
  \end{gather}
\end{lem}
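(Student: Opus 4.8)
The plan is to handle the case $\alpha = 1$ first, via the maximum principle of Proposition~\ref{2. prop: maximum principle for difference of potentials}, and then to transfer the conclusion to an arbitrary $\alpha > 0$ using the resolvent identity for potentials, Lemma~\ref{lem: resolvent identity for potentials}. Throughout I would keep in mind that $\supnorm{\resolop_1\rho} < \infty$ forces $\supnorm{\resolop_\beta\rho} < \infty$ for every $\beta > 0$ (the last assertion of Lemma~\ref{lem: resolvent identity for potentials}), so that all the potentials below are bounded and every subtraction makes sense.

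\emph{Step 1 (the case $\alpha = 1$).} Since $\mu_n + \mu$ is a finite measure and $\supnorm{\resolop_1(\mu_n + \mu)} \le \supnorm{\resolop_1\mu_n} + \supnorm{\resolop_1\mu} < \infty$, we have $\mu_n + \mu \in \smooth_{00}$; moreover $\mu_n, \mu \in \smooth_{00}$ by Assumption~\ref{1. assum: stronger assumption}. Hence Proposition~\ref{2. prop: maximum principle for difference of potentials}, applied to the pair $(\mu_n, \mu)$ at level $\alpha = 1$, yields
\[
  \supnorm{\resolop_1\mu_n - \resolop_1\mu}
  =
  \inftynorm{\resolop_1\mu_n - \resolop_1\mu}{\mu_n + \mu},
\]
and the right-hand side tends to $0$ by Assumption~\ref{1. assum: stronger assumption}. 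Thus $\supnorm{\resolop_1\mu_n - \resolop_1\mu} \to 0$; and since a convergent real sequence is bounded and $\supnorm{\resolop_1\mu} < \infty$, we also get $\sup_{n \ge 1} \supnorm{\resolop_1\mu_n} < \infty$.

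\emph{Step 2 (general $\alpha > 0$).} The idea is to express $\resolop_\alpha$ in terms of $\resolop_1$ by choosing the form of \eqref{2. lem eq: resolvent identity for potentials} in which $\resolop_\alpha$ does not reappear under the integral. If $\alpha \ge 1$, then the second equality in \eqref{2. lem eq: resolvent identity for potentials} with $\beta = 1$, solved for $\resolop_\alpha\rho$, gives, for $\rho \in \{\mu_n, \mu\}$,
\[
  \resolop_\alpha\rho(x)
  =
  \resolop_1\rho(x) - (\alpha - 1) \int_S \resol_\alpha(x, z)\, \resolop_1\rho(z)\, m(dz);
\]
if $0 < \alpha < 1$, then the first equality in \eqref{2. lem eq: resolvent identity for potentials}, read with $1$ in the role of its $\alpha$ and the present $\alpha$ in the role of its $\beta$, gives
\[
  \resolop_\alpha\rho(x)
  =
  \resolop_1\rho(x) + (1 - \alpha) \int_S \resol_\alpha(x, z)\, \resolop_1\rho(z)\, m(dz).
\]
Subtracting the identity for $\rho = \mu$ from that for $\rho = \mu_n$, taking absolute values, using $\int_S \resol_\alpha(x, z)\, m(dz) \le \alpha^{-1}$ (which follows since $\int_S p_t(x,z)\, m(dz) \le 1$), and then the supremum over $x \in S$, one obtains
\[
  \supnorm{\resolop_\alpha\mu_n - \resolop_\alpha\mu}
  \le
  \Bigl( 1 + \frac{|\alpha - 1|}{\alpha} \Bigr)\, \supnorm{\resolop_1\mu_n - \resolop_1\mu},
\]
which tends to $0$ by Step~1. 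Finally, $\sup_{n \ge 1} \supnorm{\resolop_\alpha\mu_n} \le \supnorm{\resolop_\alpha\mu} + \sup_{n \ge 1}\supnorm{\resolop_\alpha\mu_n - \resolop_\alpha\mu} < \infty$, since $\supnorm{\resolop_\alpha\mu} < \infty$ by Lemma~\ref{lem: resolvent identity for potentials}. This finishes the argument.

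The only place calling for care — and the nearest thing to an obstacle — is the bookkeeping in Step~2: one must pick the form of \eqref{2. lem eq: resolvent identity for potentials} that isolates $\resolop_\alpha$ instead of leaving it inside the integral on the right (hence the split into $\alpha \ge 1$ and $\alpha < 1$), and one must have recorded beforehand that the four potentials $\resolop_1\mu_n$, $\resolop_1\mu$, $\resolop_\alpha\mu_n$, $\resolop_\alpha\mu$ are all finite, so that none of the displayed identities or estimates degenerates into an indeterminate $\infty - \infty$.
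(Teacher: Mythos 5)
Your proposal is correct and follows essentially the same route as the paper: establish $\supnorm{\resolop_{1}\mu_{n}-\resolop_{1}\mu}\to 0$ via Proposition \ref{2. prop: maximum principle for difference of potentials} and then transfer to general $\alpha>0$ through the resolvent identity of Lemma \ref{lem: resolvent identity for potentials} together with the bound $\int_{S}\resol_{\alpha}(x,z)\,m(dz)\leq\alpha^{-1}$, arriving at the same factor $1+|1-\alpha|\alpha^{-1}$. The only cosmetic difference is your case split $\alpha\geq 1$ versus $\alpha<1$, which is not needed since Lemma \ref{lem: resolvent identity for potentials} already gives the identity for all $\alpha,\beta>0$ when the measure lies in $\smooth_{00}$.
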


\begin{proof}
  Fix $\alpha > 0$.
  Using Lemma~\ref{lem: resolvent identity for potentials},
  we deduce that 
  \begin{align}
    \lefteqn{|\resolop_{\alpha}\mu_{n}(x) - \resolop_{\alpha}\mu(x)|}\\
    &\leq     
    |\resolop_{1}\mu_{n}(x) - \resolop_{1}\mu(x)|
    + 
    |1 - \alpha| 
    \int_{S} \resol_{\alpha}(x,y) |\resolop_{1}\mu_{n}(y) - \resolop_{1}\mu(y)|\, m(dy)\\
    & \leq 
    \|\resolop_{1}\mu_{n} - \resolop_{1}\mu\|_{\infty}  
    + 
    |1-\alpha| \alpha^{-1} \|\resolop_{1}\mu_{n} - \resolop_{1}\mu\|_{\infty},
  \end{align}
  where we use that $\int_{S} \resol_{\alpha}(x,y)\, m(dy) \leq \alpha^{-1}$ at the last inequality.
  Since we have from Assumption~\ref{1. assum: stronger assumption}
  and Proposition~\ref{2. prop: maximum principle for difference of potentials} that 
  \begin{equation} \label{eq: convergence of 1-potentials}
    \lim_{n \to \infty} 
    \|\resolop_{1}\mu_{n} - \resolop_{1}\mu\|_{\infty} 
    = 0,
  \end{equation}
  we obtain the first result.
  Since $\supnorm{\resolop_{1}\mu} < \infty$,
  we have from Lemma~\ref{lem: resolvent identity for potentials} that $\supnorm{\resolop_{\alpha}\mu} < \infty$ for any $\alpha > 0$.
  Similarly, $\supnorm{\resolop_{\alpha}\mu_{n}} < \infty$ for any $n \geq 1$ and $\alpha > 0$.
  These, combined with the first result, yield the second result.
\end{proof}

By Theorem~\ref{3. thm: key inequality} and Lemma~\ref{4. lem: uniform convergence of potentials},
we can prove Theorem~\ref{1. thm: the main strong result} as follows.

\begin{proof} [Proof of Theorem~\ref{1. thm: the main strong result}]
  Suppose that Assumption~\ref{1. assum: stronger assumption} is satisfied.
  It follows from Theorem~\ref{3. thm: key inequality} that, for any $\alpha > 0$,
  \begin{align}
    \sup_{x \in S}
    E_x\!\left[ \sup_{0 \leq t \leq T} |A_{n}(t) - A(t)|^{2} \right]
    & \leq 
    18(\|\resolop_{\alpha}\mu_{n}\|_{\infty} + \|\resolop_{\alpha}\mu\|_{\infty}) 
    \|\resolop_{\alpha} \mu_{n} - \resolop_{\alpha}\mu\|_{\infty} \\
    & \quad
    + 
    4e^{2T}(1- e^{-\alpha T})( \|\resolop_{1}\mu_{n}\|_{\infty}^{2} + \|\resolop_{1}\mu\|_{\infty}^{2})
  \end{align}
  Letting $n \to \infty$ and then $\alpha \to 0$ in the above inequality
  and using  Lemma~\ref{4. lem: uniform convergence of potentials},
  we obtain the desired result.
\end{proof}

Finally,
using Theorem~\ref{1. thm: the main strong result},
we obtain Theorem~\ref{1. thm: the main general result} as follows.

\begin{proof} [{Proof of Theorem~\ref{1. thm: the main general result}}]
  Suppose that Assumption~\ref{1. assum: general assumption} is satisfied.
  By Proposition \ref{2. prop: maximum principle for difference of potentials},
  similarly to \eqref{eq: convergence of 1-potentials},
  it holds that 
  \begin{equation}
    \lim_{n \to \infty} 
    \supnorm{\resolop_{1}\mu_{n}^{V_{k}} - \resolop_{1}\mu^{V_{k}}}
    = 0,
    \quad 
    \forall k \geq 1.
  \end{equation}
  Thus, the measures $\mu_{n}^{V_{k}}$ and $\mu^{V_{k}}$ satisfy Assumption~\ref{1. assum: stronger assumption} for each $k \geq 1$.
  Define $A_{n}^{V_{k}}(t) \coloneqq \int_{0}^{t} 1_{V_{k}}(X_{s})\, A_{n}(ds)$
  and $A^{V_{k}}(t) \coloneqq \int_{0}^{t} 1_{V_{k}}(X_{s})\, A(ds)$.
  By Lemma~\ref{2. lem: restriction of smooth measures},
  $A_{n}^{V_{k}}$ and $A^{V_{k}}$ are the PCAFs
  associated with $\mu_{n}^{V_{k}}$ and $\mu^{V_{k}}$, respectively.
  Therefore, Theorem~\ref{1. thm: the main strong result} implies that,
  for any $\varepsilon, T>0$ and $k \geq 1$,
  \begin{equation} \label{eq: truncated PCAFs convergence}
    \lim_{n \to \infty}
    \sup_{x \in S}
    E_x\!\left[ \sup_{0 \leq t \leq T} |A_{n}^{V_{k}}(t) - A^{V_{k}}(t)|^{2} \right]
    = 0.
  \end{equation}
  Fix $x \in S$ and $\varepsilon, T>0$.
  Recall from \eqref{2. eq: exit time} that $\tau_{V_{k}}$ denotes the first exit time of $V_{k}$ by $X$.
  If $\tau_{V_{k}} > T$, then $A_{n}^{V_{k}}(t) = A_{n}(t)$ and $A^{V_{k}}(t) = A(t)$ for all $t \in [0, T]$.
  Thus, we deduce that
  \begin{align}
    P_x\!\left( \sup_{0 \leq t \leq T} |A_{n}(t) - A(t)| > \varepsilon \right)
    &\leq     
    P_{x}(\tau_{V_{k}} \leq T) 
    + 
    P_x\!\left( \sup_{0 \leq t \leq T} |A_{n}^{V_{k}}(t) - A^{V_{k}}(t)| > \varepsilon \right) \\
    &\leq 
    P_{x}(\tau_{V_{k}} \leq T)  
    + 
    \varepsilon^{-2} E_x\!\left[ \sup_{0 \leq t \leq T} |A_{n}^{V_{k}}(t) - A^{V_{k}}(t)|^{2} \right],
  \end{align}
  where we use Markov's inequality to obtain the last inequality.
  Since the process $X$ is conservative,
  $\tau_{V_{k}} \to \infty$ as $k \to \infty$ almost surely.
  Therefore, we can conclude that
  \begin{equation} \label{pr: desired conv of general PCAFs}
    \lim_{n \to \infty}
    P_x\!\left( \sup_{0 \leq t \leq T} |A_{n}(t) - A(t)| > \varepsilon \right)
    = 0.
  \end{equation}
\end{proof}

As seen in the above proof,
if the convergence $\lim_{k \to \infty}P_x(\tau_{V_k} \leq T) = 0$ holds locally uniformly in $x$,
then the desired convergence~\eqref{pr: desired conv of general PCAFs} holds locally uniformly in $x$.
For example, this assumption is satisfied when the process $X$ is a doubly-Feller process,
and we state this precisely below.
Recall from \cite{Chung_86_DoublyFeller} that 
the transition semigroup $(T_t)_{t > 0}$ of $X$ is said to have the \emph{Feller property} if 
it satisfies the following conditions:
\begin{enumerate} [label = (F\arabic*), leftmargin = *]
  \item for each $t > 0$ and continuous function $f$ on $S_\Delta$ with $f(\Delta) = 0$,
    $T_t f$ is continuous on $S_\Delta$;
  
  \item for each continuous function $f$ on $S_\Delta$ with $f(\Delta) = 0$, 
    $\displaystyle \lim_{t \downarrow 0} T_tf(x) = f(x)$;
\end{enumerate}
The semigroup $(T_t)_{t > 0}$ is said to have the \emph{strong Feller property} if 
\begin{enumerate} [label = (SF)] 
  \item for each $t>0$ and bounded measurable function $f$ on $S$,
    $T_t f$ is a bounded continuous function on $S$.
\end{enumerate}
If the semigroup has both the Feller property and the strong Feller property,
then it is said to have the \emph{doubly-Feller property}.

\begin{thm} \label{thm: modification for doubly-feller proc}
  Suppose that Assumption~\ref{1. assum: general assumption} is satisfied
  and $X$ is conservative.
  Moreover, suppose that the transition semigroup of $X$ has the doubly-Feller property.
  Then, for any $\varepsilon, T> 0$ and compact subset $K \subseteq S$,
  \begin{equation} \label{thm eq: locl unif convergence of PCAFs in probability}
    \lim_{n \to \infty}
    \sup_{x \in K}
    P_x\! 
    \left(
      \sup_{0 \leq t \leq T}|A_{n}(t) - A(t)| > \varepsilon
    \right)
    =0.
  \end{equation}
\end{thm}

\begin{proof}
  For each $k \geq 1$, we define a transition semigroup $(T_t^k)_{t > 0}$ with state space $S_\Delta$ 
  to be the semigroup of $X$ killed when exiting $V_k$,
  that is,
  \begin{equation}
    T_t^k f(x) \coloneqq E_x\!\left[ f(X_t) \cdot 1_{\{t < \tau_{V_k}\}} \right].
  \end{equation}
  As shown in the proof of \cite[Theorem in Section~1]{Chung_86_DoublyFeller},
  the semigroup $(T_t^k)_{t > 0}$ has the strong Feller property.
  Fix $t > 0$.
  Since it holds that 
  \begin{equation}
    P_x(\tau_{V_k} \leq t) = 1 - P_x(\tau_{V_k} > t) = 1 - T_t^k 1(x),
  \end{equation}
  the function $x \mapsto P_x(\tau_{V_k} \leq t)$ is continuous for each $k \geq 1$.
  Since $P_x(\tau_{V_k} \leq t) \to 0$ as $k \to \infty$ for each $x \in S$
  and $P_x(\tau_{V_k} \leq t)$ is non-increasing with respect to $k$ for each $x \in S$,
  Dini's theorem implies that 
  $P_x(\tau_{V_k} \leq t) \to 0$ as $k \to \infty$ locally uniformly in $x \in S$.
  Hence,
  \begin{equation}
    \lim_{k \to \infty} \sup_{x \in K} P_x(\tau_{V_k} \leq t) = 0.
  \end{equation}
  Using the above convergence instead of the almost-sure convergence $\tau_{V_k} \to \infty$ in the proof of Theorem~\ref{1. thm: the main general result},
  we obtain the desired result.
\end{proof}


\section{Sufficient conditions for Assumptions \ref{1. assum: stronger assumption} and \ref{1. assum: general assumption}} \label{sec: sufficient conditions}

In many examples of interest,
the transition density $p$ is jointly continuous.
Below, 
we provide sufficient conditions for Assumptions \ref{1. assum: stronger assumption} and \ref{1. assum: general assumption}
that are tractable in applications.
Note that, for a measure $\nu$ on $S$, we write $\supp(\nu)$ for the topological support of $\nu$.

\begin{thm} \label{1. thm: a sufficient condition for stronger assumption}
  Let $\mu, \mu_1, \mu_2, \ldots$ be finite Borel measures on $S$.
  Assume that the following conditions are satisfied.
  \begin{enumerate} [label = \textup{(A\arabic*)}]
    \item \label{1. thm item: stronger, continuity of heat kernel}
      The transition density $p$ is jointly continuous with values in $[0, \infty)$.
    \item \label{1. thm item: stronger, weak convergence of measures} 
      Each of the measures $\mu, \mu_{1}, \mu_{2}, \ldots$ charges no semipolar sets, and $\mu_{n} \to \mu$ weakly.
    \item \label{1. thm item: stronger, compactness of supports}
      There exists a compact subset $K_{0}$ of $S$ 
      such that $\supp(\mu_{n}) \subseteq K_{0}$ for all $n \geq 1$ and $\supp(\mu) \subseteq K_{0}$.
    \item \label{1. thm item: stronger, short time control}
      For any compact subset $K$ of $S$,
      \begin{equation}
        \lim_{t \to 0}
        \sup_{n \geq 1} 
        \sup_{x \in K}
        \int_{0}^{t} \int_{S} p_{s}(x, y)\, \mu_{n}(dy)\, ds
        = 0.
      \end{equation}
  \end{enumerate}
  Then Assumption~\ref{1. assum: stronger assumption} is satisfied.
\end{thm}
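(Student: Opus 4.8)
The plan is to verify the two requirements of Assumption~\ref{1. assum: stronger assumption} separately: first that $\mu_{n},\mu\in\smooth_{00}$, and then that $\resolop_{1}\mu_{n}\to\resolop_{1}\mu$ uniformly on a fixed relatively compact neighbourhood of $K_{0}$. Since $\supp(\mu_{n}+\mu)\subseteq K_{0}$ by~\ref{1. thm item: stronger, compactness of supports}, the second statement is more than enough to force $\inftynorm{\resolop_{1}\mu_{n}-\resolop_{1}\mu}{\mu_{n}+\mu}\to 0$, because for a nonnegative Borel function $g$ one always has $\inftynorm{g}{\mu_{n}+\mu}\le\sup_{x\in\supp(\mu_{n}+\mu)}g(x)$. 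The engine is Proposition~\ref{2. prop: smoothness conditon} for membership in $\smooth_{00}$, together with the splitting $\resol_{1}(x,y)=\int_{0}^{\delta}e^{-t}p_{t}(x,y)\,dt+\int_{\delta}^{\infty}e^{-t}p_{t}(x,y)\,dt$ into a short-time and a long-time part: the short-time part is controlled uniformly in $n$ by~\ref{1. thm item: stronger, short time control}, while the long-time part, which by~\ref{1. thm item: stronger, continuity of heat kernel} and~\eqref{2. eq: C-K equation} is a jointly continuous, locally bounded kernel, is handled by passing the weak convergence~\ref{1. thm item: stronger, weak convergence of measures} through an equicontinuous family of test functions via the Arzel\`a--Ascoli theorem.

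\textit{Membership.} Every finite Borel measure on $S$ is Radon. For each fixed $n$, restricting the inner integral to a compact set only decreases it, so~\ref{1. thm item: stronger, short time control} shows $\mu_{n}$ satisfies the hypothesis~\eqref{prop eq: vanishment of energy at time 0} of Proposition~\ref{2. prop: smoothness conditon}; choosing a relatively compact open $U\supseteq K_{0}$ and using~\ref{1. thm item: stronger, compactness of supports} gives $\mu_{n}=\mu_{n}^{U}\in\smooth_{00}$. For $\mu$ itself condition~\ref{1. thm item: stronger, short time control} says nothing, so I would transfer the bound by weak convergence: for fixed $x$ and $\delta>0$ the map $y\mapsto\int_{0}^{\delta}p_{t}(x,y)\,dt$ is lower semicontinuous, being the increasing limit as $\varepsilon\downarrow 0$ of the continuous maps $y\mapsto\int_{\varepsilon}^{\delta}p_{t}(x,y)\,dt$ (continuity from~\ref{1. thm item: stronger, continuity of heat kernel} and dominated convergence on compacta), whence the portmanteau theorem yields, for every compact $K$,
\begin{equation}
  \sup_{x\in K}\int_{0}^{\delta}\!\!\int_{S}p_{t}(x,y)\,\mu(dy)\,dt
  \le
  \sup_{n\ge 1}\,\sup_{x\in K}\int_{0}^{\delta}\!\!\int_{S}p_{t}(x,y)\,\mu_{n}(dy)\,dt,
\end{equation}
which tends to $0$ as $\delta\to 0$ by~\ref{1. thm item: stronger, short time control}; hence $\mu$ too satisfies~\eqref{prop eq: vanishment of energy at time 0}, so $\mu=\mu^{U}\in\smooth_{00}$.

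\textit{Convergence.} With $U\supseteq K_{0}$ relatively compact open as above, write $\resol_{1}=\resol_{1}^{(\delta)}+\tilde{\resol}^{(\delta)}$ for the short- and long-time parts. By~\ref{1. thm item: stronger, short time control} and the portmanteau bound, $\sup_{x\in\overline{U}}\int\resol_{1}^{(\delta)}(x,y)\,(\mu_{n}+\mu)(dy)\to 0$ as $\delta\to 0$ uniformly in $n$; as a by-product this writes $\resolop_{1}\mu_{n}$ and $\resolop_{1}\mu$ as locally uniform limits of continuous functions, hence they are continuous. For the long-time part, joint continuity of $p$ on $[\delta,\infty)\times S\times S$ together with the domination $e^{-t}p_{t}(x,y)\le e^{-t}\sqrt{p_{\delta}(x,x)}\sqrt{p_{\delta}(y,y)}$ for $t\ge\delta$ (from~\eqref{2. eq: C-K equation}, the Cauchy--Schwarz inequality, and the monotonicity of $t\mapsto p_{t}(x,x)$) shows $\tilde{\resol}^{(\delta)}$ is jointly continuous and bounded on $\overline{U}\times\overline{U}$. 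Thus the family $\{\,K_{0}\ni y\mapsto\tilde{\resol}^{(\delta)}(x,y)\mid x\in\overline{U}\,\}$ is uniformly bounded and equicontinuous, hence relatively compact in $C(K_{0})$ by Arzel\`a--Ascoli; covering it by finitely many small balls and using $\sup_{n}\mu_{n}(K_{0})<\infty$ (which follows from weak convergence) reduces $\sup_{x\in\overline{U}}\bigl|\int\tilde{\resol}^{(\delta)}(x,y)\,(\mu_{n}-\mu)(dy)\bigr|$ to finitely many instances of weak convergence, so it tends to $0$ as $n\to\infty$ for each fixed $\delta$. Combining the two parts, $\sup_{x\in\overline{U}}|\resolop_{1}\mu_{n}(x)-\resolop_{1}\mu(x)|\to 0$.

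\textit{Conclusion and main obstacle.} Since $\supp(\mu_{n}+\mu)\subseteq K_{0}\subseteq\overline{U}$ by~\ref{1. thm item: stronger, compactness of supports}, we get $\inftynorm{\resolop_{1}\mu_{n}-\resolop_{1}\mu}{\mu_{n}+\mu}\le\sup_{x\in K_{0}}|\resolop_{1}\mu_{n}(x)-\resolop_{1}\mu(x)|\to 0$, which together with $\mu_{n},\mu\in\smooth_{00}$ is exactly Assumption~\ref{1. assum: stronger assumption}. The step I expect to be most delicate is the last one of the third paragraph: upgrading the weak convergence $\mu_{n}\to\mu$ to convergence that is uniform in the parameter $x$ of the kernels $\tilde{\resol}^{(\delta)}(x,\cdot)$, for which the equicontinuity of this family---itself a consequence of joint continuity of $p$ away from $t=0$---is essential. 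The secondary delicate point is the lower-semicontinuity/portmanteau argument needed to bring $\mu$ under the short-time control~\ref{1. thm item: stronger, short time control}, which by itself only quantifies over the sequence $(\mu_{n})_{n\ge1}$.
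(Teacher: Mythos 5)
Your proposal is correct and follows essentially the same route as the paper: membership in $\smooth_{00}$ via Proposition \ref{2. prop: smoothness conditon} together with a Fatou/portmanteau transfer of \ref{1. thm item: stronger, short time control} to the limit measure, and then uniform convergence of the $1$-potentials on $K_{0}$ via a short-time/long-time splitting of $\resol_{1}$, with the long-time part handled by testing the weak convergence against an equicontinuous family of kernel slices. The only cosmetic differences are that the paper splits the time integral into three pieces ($\int_{0}^{\delta}+\int_{\delta}^{T}+\int_{T}^{\infty}$, using the Cauchy--Schwarz bound of Lemma \ref{4. lem: properties of heat kernels} only for the tail) and cites a lemma of Whitt where you spell out the Arzel\`a--Ascoli covering argument.
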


\begin{thm} \label{1. thm: a sufficient condition for general assumption}
  Let $\mu, \mu_1, \mu_2, \ldots$ be Radon measures on $S$.
  Assume that the following conditions are satisfied.
  \begin{enumerate} [label = \textup{(B\arabic*)}]
    \item \label{1. thm item: general, continuity of heat kernel}
      The transition density $p$ is jointly continuous with values in $[0, \infty)$.
    \item \label{1. thm item: general, vague convergence of measures} 
      Each of the measures $\mu, \mu_{1}, \mu_{2}, \ldots$ charges no semipolar sets, and $\mu_{n} \to \mu$ vaguely,
      i.e., for all compactly supported functions $f\colon S \to \RN$, 
      \begin{equation}
        \lim_{n \to \infty} 
        \int_{S} f(x)\, \mu_{n}(dx) 
        = 
        \int_{S} f(x)\, \mu(dx).
      \end{equation}
    \item \label{1. thm item: general, short time control}
      For any compact subset $K$ of $S$, it holds that 
      \begin{equation}
        \lim_{t \to 0}
        \sup_{n \geq 1} 
        \sup_{x \in K}
        \int_{0}^{t} \int_{K} p_{s}(x, y)\, \mu_{n}(dy)\, ds
        = 0.
      \end{equation}
  \end{enumerate}
  Then Assumption~\ref{1. assum: general assumption} is satisfied.
\end{thm}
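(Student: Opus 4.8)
The plan is to reduce Theorem~\ref{1. thm: a sufficient condition for general assumption} to Theorem~\ref{1. thm: a sufficient condition for stronger assumption} by localizing $\mu$ and the $\mu_{n}$ along a suitable exhaustion of $S$.

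First I would choose the exhaustion. Since $\mu$ is a Radon measure and $S$ is locally compact, separable and metrizable, there is an increasing sequence $(V_{k})_{k \geq 1}$ of relatively compact open subsets with $\bigcup_{k \geq 1} V_{k} = S$ and $\mu(\partial V_{k}) = 0$ for every $k$: starting from any exhaustion $(U_{k})$ with $\overline{U_{k}} \subseteq U_{k+1}$, pick $\phi_{k} \in C_{c}(S)$ with $\phi_{k} \equiv 1$ on $\overline{U_{k}}$, $0 \leq \phi_{k} \leq 1$ and $\supp \phi_{k} \subseteq U_{k+1}$, and set $V_{k} \coloneqq \{\phi_{k} > a_{k}\}$ where $a_{k} \in (0,1)$ is chosen so that $\mu(\{\phi_{k} = a_{k}\}) = 0$ (such $a_{k}$ exists since the level sets $\{\phi_{k} = a\}$, $a \in (0,1)$, are pairwise disjoint, so all but countably many have $\mu$-measure zero). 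Then $\partial V_{k} \subseteq \{\phi_{k} = a_{k}\}$ is $\mu$-null. Now fix $k$. All of $\mu^{V_{k}}, \mu_{1}^{V_{k}}, \mu_{2}^{V_{k}}, \ldots$ are supported in the fixed compact set $\overline{V_{k}}$, so I would apply Theorem~\ref{1. thm: a sufficient condition for stronger assumption} to the sequence $(\mu_{n}^{V_{k}})_{n \geq 1}$ with limit $\mu^{V_{k}}$, taking $K_{0} \coloneqq \overline{V_{k}}$ in hypothesis \ref{1. thm item: stronger, compactness of supports}; hypothesis \ref{1. thm item: stronger, continuity of heat kernel} is just \ref{1. thm item: general, continuity of heat kernel}, and it remains to check weak convergence of the restricted measures and the short-time control.

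For \ref{1. thm item: stronger, weak convergence of measures}, pick $\chi \in C_{c}(S)$ with $\chi \equiv 1$ on $\overline{V_{k}}$. Then $\chi \mu_{n} \to \chi \mu$ weakly, because for $f \in C_{b}(S)$ the function $f\chi$ belongs to $C_{c}(S)$ and \ref{1. thm item: general, vague convergence of measures} applies. Since $(\chi\mu)(\partial V_{k}) = 0$, the elementary fact that restriction to a Borel set whose boundary is null for the limit measure preserves weak convergence of finite measures yields $1_{V_{k}}(\chi\mu_{n}) \to 1_{V_{k}}(\chi\mu)$ weakly, which is exactly $\mu_{n}^{V_{k}} \to \mu^{V_{k}}$ as $\chi \equiv 1$ on $V_{k}$. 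For \ref{1. thm item: stronger, short time control}, since $\mu_{n}^{V_{k}} \leq 1_{\overline{V_{k}}}\,\mu_{n}$ it suffices to prove that $\sup_{n}\sup_{x \in S} \int_{0}^{t} \int_{\overline{V_{k}}} p_{s}(x,y)\,\mu_{n}(dy)\,ds \to 0$ as $t \to 0$. By \ref{1. thm item: general, short time control}, each Radon measure $\mu_{n}$ satisfies \eqref{prop eq: vanishment of energy at time 0}, so the Chapman--Kolmogorov and first-hitting-time argument used in the proof of Proposition~\ref{2. prop: smoothness conditon} gives, for all small $t > 0$ and all $n$,
\begin{equation}
  \sup_{x \in S} \int_{0}^{t} \int_{\overline{V_{k}}} p_{s}(x,y)\,\mu_{n}(dy)\,ds
  \leq
  \sup_{x \in \overline{V_{k}}} \int_{0}^{t} \int_{\overline{V_{k}}} p_{s}(x,y)\,\mu_{n}(dy)\,ds ,
\end{equation}
and the supremum over $n \geq 1$ of the right-hand side tends to $0$ as $t \to 0$ by \ref{1. thm item: general, short time control}. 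Thus Theorem~\ref{1. thm: a sufficient condition for stronger assumption} yields $\mu^{V_{k}}, \mu_{n}^{V_{k}} \in \smooth_{00}$ together with $\lim_{n \to \infty}\inftynorm{\resolop_{1}\mu_{n}^{V_{k}} - \resolop_{1}\mu^{V_{k}}}{\mu_{n}^{V_{k}} + \mu^{V_{k}}} = 0$; since $k$ was arbitrary and $(V_{k})_{k \geq 1}$ is an increasing exhaustion of $S$ by relatively compact open sets, this is precisely Assumption~\ref{1. assum: general assumption}.

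The main obstacle is the uniformity in $n$ in the last display: one cannot simply invoke Proposition~\ref{2. prop: smoothness conditon} as a black box, but must re-examine its proof and check that the hitting-time bound there depends on $\mu_{n}$ only through the quantity $\sup_{x \in \overline{V_{k}}}\int_{0}^{t}\int_{\overline{V_{k}}} p_{s}(x,y)\,\mu_{n}(dy)\,ds$, so that \ref{1. thm item: general, short time control} can be applied with the supremum over $n$ intact. The remaining ingredients --- the boundary-null choice of the $V_{k}$ (needed for the restriction of measures to be compatible with the vague limit) and the measure-theoretic lemma on restrictions --- are routine.
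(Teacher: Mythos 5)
Your proof is correct and follows essentially the same route as the paper: construct an exhaustion $(V_{k})_{k \geq 1}$ by relatively compact open sets with $\mu(\partial V_{k}) = 0$ (the paper does this via a metric in which bounded closed sets are compact and a choice of good radii, you via level sets of Urysohn functions and a choice of good levels --- the same countability trick), deduce weak convergence of the restricted measures on the compact set $\overline{V}_{k}$, and invoke Theorem \ref{1. thm: a sufficient condition for stronger assumption}. The only superfluous step is your detour through the hitting-time argument of Proposition \ref{2. prop: smoothness conditon} to control $\sup_{x \in S}$: condition \ref{1. thm item: stronger, short time control} only requires the supremum over an arbitrary compact $K$, and since $\mu_{n}^{V_{k}}$ is carried by $\overline{V}_{k}$, it follows immediately from \ref{1. thm item: general, short time control} applied to the compact set $K \cup \overline{V}_{k}$.
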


We first prove Theorem~\ref{1. thm: a sufficient condition for stronger assumption}.
By Proposition \ref{2. prop: smoothness condition},
\ref{1. thm item: stronger, short time control} yields that $\mu_{n} \in \smooth_{00}$ for each $n \geq 1$.
The following result states that the limiting measure $\mu$ also belongs to $\smooth_{00}$.

\begin{lem} \label{4. lem: short time control for the limit}
  Under the conditions of Theorem~\ref{1. thm: a sufficient condition for stronger assumption},
  for any compact subset $K$
  \begin{equation}
    \lim_{\delta \downarrow 0} 
    \sup_{x \in K}
    \int_{0}^{\delta} \int_{S} p_{t}(x, y)\, \mu(dy)\, dt
    =0.
  \end{equation}
  In particular, $\mu \in \smooth_{00}$.
\end{lem}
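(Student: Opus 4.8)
The plan is to bound, for a fixed compact set $K\subseteq S$, the quantity $\sup_{x\in K}\int_0^\delta\int_S p_t(x,y)\,\mu(dy)\,dt$ by the analogous quantity for the $\mu_n$, and then to invoke \ref{1. thm item: stronger, short time control}. By \ref{1. thm item: stronger, compactness of supports}, every integral $\int_S(\cdots)\,\mu_n(dy)$ and $\int_S(\cdots)\,\mu(dy)$ may be taken over the single compact set $K_0$. The only genuine difficulty is that $t\mapsto p_t(x,y)$ may behave badly near $t=0$, so one cannot directly test the weak convergence $\mu_n\to\mu$ against the function $y\mapsto\int_0^\delta p_t(x,y)\,dt$ (which need not even be finite); the remedy I would use is to truncate the time integral away from $0$.

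So, fix $x\in K$ and $0<\varepsilon<\delta$. By \ref{1. thm item: stronger, continuity of heat kernel}, $(t,y)\mapsto p_t(x,y)$ is continuous, hence bounded, on the compact set $[\varepsilon,\delta]\times K_0$, and dominated convergence then shows that $y\mapsto\int_\varepsilon^\delta p_t(x,y)\,dt$ is bounded and continuous on $K_0$; multiplying by a cutoff $\chi\in C_c(S)$ with $0\le\chi\le1$ and $\chi\equiv1$ on $K_0$ produces a function in $C_b(S)$ agreeing with it on $K_0$. Hence, using \ref{1. thm item: stronger, weak convergence of measures} and then Tonelli's theorem,
\[
  \int_{K_0}\!\int_\varepsilon^\delta p_t(x,y)\,dt\,\mu(dy)
  =\lim_{n\to\infty}\int_{K_0}\!\int_\varepsilon^\delta p_t(x,y)\,dt\,\mu_n(dy)
  \le\sup_{n\ge1}\sup_{x'\in K}\int_0^\delta\!\int_S p_t(x',y)\,\mu_n(dy)\,dt .
\]
Since $p\ge0$, letting $\varepsilon\downarrow0$ and applying the monotone convergence theorem upgrades the left-hand side to $\int_0^\delta\int_S p_t(x,y)\,\mu(dy)\,dt$; as the right-hand side does not depend on $x$, taking the supremum over $x\in K$ preserves the bound, and then letting $\delta\downarrow0$ together with \ref{1. thm item: stronger, short time control} gives the asserted limit. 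For the final assertion, this limit in particular shows that $\mu$ satisfies \eqref{prop eq: vanishment of energy at time 0} for every compact $K$, so Proposition \ref{2. prop: smoothness conditon} yields $\mu^{D}\in\smooth_{00}$ for every relatively compact Borel subset $D$ of $S$; taking $D=K_0$ and noting that $\mu(S\setminus K_0)=0$ because $\supp\mu\subseteq K_0$ (so that $\mu=\mu^{K_0}$), we conclude $\mu\in\smooth_{00}$.

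The main obstacle I anticipate is exactly the short-time singularity of the heat kernel, which is dealt with above by the $[\varepsilon,\delta]$-truncation followed by monotone convergence in $\varepsilon$. A secondary, minor point is retaining the uniformity in $x\in K$: this causes no trouble here because the inequality above is established separately for each fixed $x$, with a right-hand side that is already independent of $x$, after which taking the supremum over $x\in K$ costs nothing.
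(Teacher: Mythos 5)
Your proof is correct and follows essentially the same route as the paper: both arguments pass the weak convergence of \ref{1. thm item: stronger, weak convergence of measures} through the time integral and then conclude with \ref{1. thm item: stronger, short time control} and Proposition \ref{2. prop: smoothness conditon}. The only (immaterial) difference is in how the interchange of the limit in $n$ with the $t$-integral is justified: the paper applies weak convergence at each fixed $t>0$ and then Fatou's lemma in $t$, whereas you truncate the time integral to $[\varepsilon,\delta]$ and let $\varepsilon\downarrow 0$ by monotone convergence.
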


\begin{proof}
  By conditions \ref{1. thm item: stronger, weak convergence of measures} and \ref{1. thm item: stronger, short time control},
  we have that, for each $t > 0$ and $x \in S$,
  \begin{equation}
    \lim_{n \to \infty} 
    \int_{S} p_{t}(x, y)\, \mu_{n}(dy)
    = 
    \int_{S}  p_{t}(x,y)\, \mu(dy).   
  \end{equation}
  Thus, 
  using Fatou's lemma 
  and condition \ref{1. thm item: stronger, short time control}, 
  we deduce that, for any compact subset $K$ of $S$,
  \begin{align}
    \lim_{\delta \downarrow 0} 
    \sup_{x \in K}
    \int_{0}^{\delta} \int_{S} p_{t}(x, y)\, \mu(dy)\, dt
    & \leq
    \lim_{\delta \downarrow 0} 
    \sup_{x \in K}
    \liminf_{n \to \infty}
    \int_{0}^{\delta} \int_{S} p_{t}(x, y)\, \mu_{n}(dy)\, dt \\
    & \leq
    \lim_{\delta \downarrow 0} 
    \liminf_{n \to \infty}
    \sup_{x \in K}
    \int_{0}^{\delta} \int_{S} p_{t}(x,y) \mu_{n}(dy)\, dt\\
    & = 
    0.
  \end{align}
  The last assertion follows from Proposition \ref{2. prop: smoothness condition}.
\end{proof}

Now, we are ready to show Theorem~\ref{1. thm: a sufficient condition for stronger assumption}.

\begin{proof} [{Proof of Theorem~\ref{1. thm: a sufficient condition for stronger assumption}}]
  Suppose conditions \ref{1. thm item: stronger, continuity of heat kernel}, \ref{1. thm item: stronger, weak convergence of measures},
  \ref{1. thm item: stronger, compactness of supports}, and \ref{1. thm item: stronger, short time control} 
  of Theorem~\ref{1. thm: a sufficient condition for stronger assumption} 
  are satisfied.
  We have from \ref{1. thm item: stronger, short time control} and Proposition \ref{2. prop: smoothness condition} 
  that $\mu_{n} \in \smooth_{00}$ for each $n \geq 1$ and 
  from Lemma~\ref{4. lem: short time control for the limit} that $\mu \in \smooth_{00}$.
  By the weak convergence of $\mu_{n}$ to $\mu$
  and the equicontinuity of $\{p_{t}(x,y)\}_{x \in K_{0}}$ as continuous functions of $(t, y) \in [\delta, T] \times K_{0}$,
  we deduce that, for any $\delta, T > 0$,
  \begin{equation} \label{eq: convergence of energy for finite time interval}
    \lim_{n \to \infty}
    \sup_{x \in K_{0}}
    \left|
      \int_{S} \int_{\delta}^{T} e^{-s} p_{s}(x,y)\, ds\, \mu_{n}(dy)
      -
      \int_{S} \int_{\delta}^{T} e^{-s} p_{s}(x,y)\, ds\, \mu(dy)
    \right|
    = 0
  \end{equation}
  (this is deduced from the Skorohod representation theorem; see also \cite[Lemma 1.1]{Whitt_unpublished_internet_supplement}).
  By \ref{1. thm item: stronger, short time control},
  we can find $t_0 > 0$ such that 
  \begin{equation}
    \sup_{n \geq 1} \sup_{x \in K_0} \int_0^{t_0} \int_S p_s(x,y)\, \mu_n(dy)\, ds < 1.
  \end{equation}
  This, combined with \eqref{pr: Maximal principle for potential}, \eqref{pr: potential bound by CK equation}, and \ref{1. thm item: stronger, compactness of supports},
  yields that $\sup_{n \geq 1}\|\resolop_1 \mu_n\|_\infty < \infty$.
  Using the Chapman--Kolmogorov equation~\eqref{2. eq: C-K equation},
  we obtain that 
  \begin{align}
    \sup_{x \in K_{0}}
    \int_{T}^{\infty} \int_{S} e^{-t} p_{t}(x, y)\, \mu_{n}(dy)\, dt 
    &=
    \sup_{x \in K_{0}}
    e^{-T}\int_{0}^{\infty} \int_S e^{-t} p_{T+ t}(x, y)\,  \mu_{n}(dy)\, dt\\
    &= 
    \sup_{x \in K_{0}}
    e^{-T}\int_{0}^{\infty} \int_S e^{-t} \int_S p_T(x,z) p_t(z,y)\, m(dz)\, \mu_{n}(dy)\, dt\\
    &\leq
    e^{-T} \|\resolop_1 \mu_n\|_\infty
    \sup_{x \in K_{0}}
    \int_S p_T(x,z)\, m(dz)\\
    &\leq     
    e^{-T} \|\resolop_1 \mu_n\|_\infty.
  \end{align}
  Thus, 
  \begin{equation} \label{eq: uniform vanishment of energy at time infinity}
    \lim_{T \to \infty} 
    \sup_{n \geq 1} 
    \sup_{x \in K_{0}} 
    \int_{T}^{\infty} \int_{S} e^{-t} p_{t}(x, y)\, \mu_{n}(dy)\, dt   
    = 0.
  \end{equation}
  Similarly, it holds that 
  \begin{equation} \label{eq: vanishment of energy at time infinity}
    \lim_{T \to \infty}  
    \sup_{x \in K_{0}} 
    \int_{T}^{\infty} \int_{S} e^{-t} p_{t}(x, y)\, \mu(dy)\, dt  
    = 0.
  \end{equation}
  The triangle inequality yields that, for any $x \in S$ and $\delta, T > 0$ with $\delta < T$,
  \begin{align}
    |\resolop_{1}\mu_{n}(x) - \resolop_{1}\mu(x)| 
    &\leq   
    \left|
      \int_{S} \int_{0}^{\infty} e^{-t} p_{t}(x,y)\, dt\, \mu_{n}(dy)
      -
      \int_{S} \int_{0}^{\infty} e^{-t} p_{t}(x,y)\, dt\, \mu(dy)
    \right| \\
    &\leq     
    \int_{0}^{\delta} \int_{S} p_{t}(x, y)\, dt\, \mu_{n}(dy) 
    + 
    \int_{0}^{\delta} \int_{S} p_{t}(x, y)\, dt\, \mu(dy) \\
    &\qquad
    + 
    \left|
      \int_{S} \int_{\delta}^{T} e^{-t} p_{t}(x,y)\, dt\, \mu_{n}(dy)
      -
      \int_{S} \int_{\delta}^{T} e^{-t} p_{t}(x,y)\, dt\, \mu(dy)
    \right| \\
    &\qquad 
    + 
    \int_{T}^{\infty} \int_{S} e^{-t} p_{t}(x, y)\, \mu_{n}(dy)\, dt  
    + 
    \int_{T}^{\infty} \int_{S} e^{-t} p_{t}(x, y)\, \mu(dy)\, dt.
  \end{align}
  Taking the supremum over $x \in K_{0}$ in the above inequality,
  letting $n \to \infty$, $\delta \to 0$, and $T \to \infty$ in the above inequality,
  and using \ref{1. thm item: stronger, short time control},
  Lemma~\ref{4. lem: short time control for the limit},
  \eqref{eq: convergence of energy for finite time interval}, \eqref{eq: uniform vanishment of energy at time infinity},
  and \eqref{eq: vanishment of energy at time infinity},
  we deduce that 
  \begin{equation}
    \lim_{n \to \infty} 
    \sup_{x \in K_{0}}
    |\resolop_{1}\mu_{n}(x) - \resolop_{1}\mu(x)|
    = 0.
  \end{equation}
  Therefore, Assumption~\ref{1. assum: stronger assumption} is satisfied.
\end{proof}

Next, we prove Theorem~\ref{1. thm: a sufficient condition for general assumption}.
By using the following result,
Theorem~\ref{1. thm: a sufficient condition for general assumption} is easily deduced from Theorem~\ref{1. thm: a sufficient condition for stronger assumption}.

\begin{lem} \label{4. lem: choice of nest}
  Under the assumptions of Theorem~\ref{1. thm: a sufficient condition for stronger assumption},
  there exists an increasing sequence $(V_{k})_{k \geq 1}$ of relatively compact open subsets of $S$ with $\bigcup_{k \geq 1} V_{k} = S$
  such that $\mu_{n}^{V_{k}} \to \mu^{V_{k}}$ weakly as $n \to \infty$, for each $k \geq 1$.
\end{lem}

\begin{proof}
  By \cite[Theorem 1]{Williamson_Janos_87_Construction},
  there exists a metric $d$ on $S$ inducing the topology on $S$ such that any bounded closed subset of $S$ is compact.
  Fix $\rho \in S$.
  Let $(r_{k})_{k \geq 1}$ be an increasing sequence with $r_{k} \uparrow \infty$ 
  such that $B_{d}(\rho, r_{k}) \coloneqq \{x \in S \mid d(\rho, x) < r_{k}\}$ is relatively compact 
  and $\mu(\partial B_{d}(\rho, r_{k})) = 0$.
  Define $V_{k} \coloneqq B_{d}(\rho, r_{k})$. 
  Condition \ref{1. thm item: general, vague convergence of measures} of Theorem~\ref{1. thm: a sufficient condition for general assumption} implies that 
  $\mu_{n}^{V_{k}} \to \mu^{V_{k}}$ weakly.
  Hence, we complete the proof.
\end{proof}

\begin{proof} [{Proof of Theorem~\ref{1. thm: a sufficient condition for general assumption}}]
  Suppose that conditions \ref{1. thm item: general, continuity of heat kernel}, \ref{1. thm item: general, vague convergence of measures}, 
  and \ref{1. thm item: general, short time control} of Theorem~\ref{1. thm: a sufficient condition for general assumption} 
  are satisfied.
  Fix $(V_{k})_{k \geq 1}$ appearing in Lemma~\ref{4. lem: choice of nest}.
  Then, for each $k \geq 1$, the measures $\mu_{n}^{V_{k}}$ and $\mu^{V_{k}}$ satisfy
  the conditions \ref{1. thm item: stronger, continuity of heat kernel}, \ref{1. thm item: stronger, weak convergence of measures},
  \ref{1. thm item: stronger, compactness of supports}, and \ref{1. thm item: stronger, short time control} 
  of Theorem~\ref{1. thm: a sufficient condition for stronger assumption}
  with $K_{0} = \overline{V}_{k}$.
  Thus, we have from Theorem~\ref{1. thm: a sufficient condition for stronger assumption} that, for each $k \geq 1$,
  $\mu_{n}^{V_{k}} \in \smooth_{00}$ for all $n \geq 1$, $\mu^{V_{k}} \in \smooth_{00}$, and 
  \begin{equation} 
    \lim_{n \to \infty} 
    \inftynorm{\resolop_{1}\mu_{n}^{V_{k}} - \resolop_{1}\mu^{V_{k}}}{\mu_{n}^{V_{k}} + \mu^{V_{k}}}
    =0.
  \end{equation}
  Thus, Assumption~\ref{1. assum: general assumption} is verified.
\end{proof}

\appendix

\section*{Acknowledgement}
I would like to thank my supervisor Dr.\ David Croydon for his support and fruitful discussions,
and Dr.\ Naotaka Kajino and Dr.\ Takahiro Mori for their valuable comments 
that helped me improve the earlier version of this paper.
I am also grateful to the anonymous referee for comments on the domination principle for $\alpha$-potentials,
which make the results applicable to the dual setting beyond the symmetric Dirichlet form framework.
This work was supported by 
JSPS KAKENHI Grant Number JP24KJ1447
and 
the Research Institute for Mathematical Sciences, 
an International Joint Usage/Research Center located in Kyoto University.

\bibliographystyle{unsrturl}
\bibliography{ref_convergence_of_PCAF}
\end{document}